\newtheorem{thm}{Theorem}[section]
\newtheorem{cor}[thm]{Corollary}
\newtheorem{lem}[thm]{Lemma}
\newtheorem{clai}[thm]{Claim}
\newtheorem{prop}[thm]{Proposition}
\newtheorem{prob}{Problem}
\theoremstyle{definition}
\newtheorem{defn}[thm]{Definition}
\theoremstyle{remark}
\newtheorem{rem}[thm]{Remark}
\newtheorem{ex}[thm]{Example}
\numberwithin{equation}{section}
\newcommand {\BB}{\mathcal{B(\HH)}}
\newcommand{\Z}{\mathbb{Z}}
\newcommand{\F}{\mathbf{F}}
\newcommand{\N}{\mathbb{N}}
\newcommand{\R}{\mathbb{R}}
\newcommand{\C}{\mathbb{C}}
\newcommand {\PR}{\Pi(\HH)}
\newcommand{\VV}{\mathcal V}
\newcommand{\EV}{{\bf V}}
\newcommand {\CC}{\mathcal{C}}
\newcommand{\QC}{{\CC}}
\newcommand{\Pos}{\mathcal{P}^+(\HH)}
\newcommand{\conv}{\text{co}}
\newcommand{\diag}{\text{diag}}
\newcommand{\la}{\langle}
\newcommand{\ra}{\rangle}
\newcommand{\wst}{weak operator topology}
\newcommand{\T}{\mathcal{T}}
\newcommand{\corank}{\text{corank}}
\newcommand{\Id}{\text{Id}}
\newcommand{\spa}{\text{span}}
\newcommand{\HH}{\mathcal{H}}
\newcommand{\W}{\mathcal{W}}
\newcommand{\eps}{\varepsilon}
\newcommand{\trace}{\textnormal{tr}}
\begin{document}

\title{On the existence of Optimal Subspace Clustering Models}

\author[Akram Aldroubi]{Akram~Aldroubi} \address[Akram Aldroubi]{Department of Mathematics\\
Vanderbilt University\\ 1326 Stevenson Center\\ Nashville, TN 37240}
\email 
{akram.aldroubi@vanderbilt.edu}

\author[Romain Tessera]{Romain~Tessra} \address[Romain Tessera]{UMPA, ENS Lyon\\
Vanderbilt University\\ 146 Alle?e dÕItalie\\69364 Lyon Cedex, France}
\email 
{rtessera@umpa.ens-lyon.fr}

 \thanks{ The research of A.~Aldroubi is supported in part by NSF Grant DMS-0807464.}

\date{\today}

\subjclass[2000]{Primary: 68Q32, 46N10, 47N10}

\keywords{Subspace clustering, Hybrid linear modeling, Best approximations by projectors}

\maketitle
\begin {abstract}
Given a set of vectors $\F=\{f_1,\dots,f_m\}$   in a Hilbert space $\HH$, and given a family  $\CC$ of closed 
subspaces of $\HH$, the {\it subspace clustering problem} consists in finding a union of subspaces
in $\CC$ that best  approximates (models) the data $\F$. 
 This problem has applications and connections to many areas of mathematics,
computer science and engineering such as the Generalized Principle
Component Analysis (GPCA), learning theory, compressed sensing,
and sampling with finite rate of innovation. In this paper, 
we characterize  families of subspaces $\CC$ for which
such a best approximation exists. In finite dimensions the characterization is in terms of  the convex hull of  an augmented set $\CC^+$. In infinite dimensions however, the characterization is in terms of a new but related notion of contact hull. As an application, the existence of best approximations from  $\pi(G)$-invariant families $\CC$ of  unitary representations of abelian groups is derived.   
\end {abstract}
\section {Introduction}

\subsection {Motivation}

Let $\HH$ be a Hilbert space, $\F=\{f_1,\dots,f_m\}$  as set of vectors in $\HH$, $\CC$ a
family of closed subspaces of $\HH$, $\VV$ the set of all
sequences of elements in $\CC$ of length $l$ ( i.e.,
$\VV=\VV(l)=\big\{ \{ V_1,\dots,V_l\}:  V_i \in \CC, 1\le i\le l
\big\})$, and $\F$ a finite subset of $\HH.$ The
following problem has several applications in mathematics,
engineering, and computer science:
\begin{prob}[Non-Linear Least Squares Subspace Approximation]\label{NLproblem} ${ }$ \newline
\begin{enumerate}
\item Given a finite set $\F \subset \HH$, find the infimum of the expression
$$e(\F,\EV): =\sum\limits_{f \in \F} \min\limits_{1\leq j \leq l} d^2(f,V_j),$$
over $\EV=\{ V_1,\dots,V_l\} \in \VV$, where $d(x,y):=\|x-y\|_{\HH}$.
\item Find a sequence of subspaces $\EV^o=\{ V^o_1,\dots,V^o_l\} \in \VV$ (if it exists) such that
\begin {equation}\label{NLsolution}
e(\F,\EV^o) = \inf \{ e(\F,\EV): \EV \in \VV\}.
\end {equation}
\end{enumerate}
\end {prob}

\bigskip\noindent When $\HH=\R^d$, and $\CC$ is the family of all subspaces of dimension less than or equal to $r$, and $l=1$, Problem \ref {NLproblem} becomes the classical problem  of searching for a subspace $V$ of dimension $dim(V)\le r$ that best fit a (finite) set of data $\F\subset \R^d$. For this case, the best approximation $V$ exists and it can be obtained using the Singular Value Decomposition (SVD).

\bigskip

\noindent When $l>1$, we get a non-linear version of the single subspace approximation problem mentioned above. This non-linear version also  has many applications in mathematics and engineering. For example, Problem \ref {NLproblem} is related and has applications to the subspace segmentation problem known in computer vision (see e.g.,\cite {VMS05} and the references therein).  The subspace segmentation problem is used for face recognition, motion tracking in videos,  and the Generalized Principle Component Analysis GPCA \cite {VMS05}. Problem \ref {NLproblem} is also related and has application to segmentation  and spectral clustering of Hybrid Linear Models (see e.g., \cite {CL09} and the reference therein). Compressed sensing is another related area where finite signals in $\C^d$ are modeled by a union of subspaces $\mathcal{M}=\cup_{i\in I }V_i$, with $\dim V_i\le s$, where $s << d$ \cite {CRT06}.

\subsection {The Minimal Subspace Approximation Property}

 It has been shown that, given a family of closed subspaces $\CC$, the existence of a minimizing sequence of subspaces $\EV^o=\{V_1^o,\dots,V_l^o\}$ that solves Problem \ref{NLproblem} is equivalent to the existence of a solution to the same problem but for $l=1$ \cite{ACM08}:
\begin{thm}
Problem \ref{NLproblem} has a minimizing set of subspaces for any $l\ge 1$ if and only if it has a minimizing subspace for $l=1$.
\end{thm}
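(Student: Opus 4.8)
The plan is to treat the two implications separately. The forward direction is immediate: if Problem \ref{NLproblem} admits a minimizer for every $l \ge 1$, then in particular it does for $l=1$, since $l=1$ is among them. All the content therefore lies in the converse, so I assume the single-subspace problem ($l=1$) admits a minimizer for every finite data set and aim to build a minimizer for arbitrary $l$.

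The key structural observation is that minimizing over an $l$-tuple of subspaces factors into a \emph{finite} combinatorial search over partitions of $\F$, layered over $l$ independent single-subspace problems. For a finite set $G \subseteq \F$ write $e_1(G) := \inf_{V \in \CC} \sum_{f \in G} d^2(f, V)$ for the $l=1$ infimum on $G$. First, given any $\EV = \{V_1, \dots, V_l\}$, assigning each $f \in \F$ to a nearest subspace (ties broken arbitrarily) yields a partition $\F = G_1 \cup \cdots \cup G_l$ with
$$e(\F, \EV) = \sum_{j=1}^l \sum_{f \in G_j} d^2(f, V_j) \ge \sum_{j=1}^l e_1(G_j);$$
taking the infimum over $\EV$ shows $\inf_{\EV} e(\F, \EV) \ge \min \sum_j e_1(G_j)$, the minimum running over partitions of $\F$ into at most $l$ parts. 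Conversely, for a fixed partition $\{G_1, \dots, G_l\}$ one chooses $V_j^o \in \CC$ attaining $e_1(G_j)$; since replacing each summand by the pointwise minimum only decreases the total, the resulting $\EV^o = \{V_1^o, \dots, V_l^o\}$ satisfies $e(\F, \EV^o) \le \sum_j \sum_{f \in G_j} d^2(f, V_j^o) = \sum_j e_1(G_j)$. Minimizing over partitions yields the reverse inequality, so the two quantities coincide.

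To conclude, I would use that $\F$ is finite: there are only finitely many partitions of $\F$ into at most $l$ parts, so the minimum defining the common value is genuinely attained at some partition $\{G_1^o, \dots, G_l^o\}$. The subspaces $V_j^o$ attaining each $e_1(G_j^o)$ then assemble into an $\EV^o$ whose error equals $\inf_{\EV} e(\F, \EV)$, i.e.\ a minimizer. The only points requiring care are bookkeeping rather than analysis: one must allow empty parts, with the convention $e_1(\emptyset) = 0$ and $V_j^o$ chosen arbitrarily in $\CC$; and the $l=1$ hypothesis must be invoked not merely for $\F$ but for every subset $G \subseteq \F$ arising in a partition, which is precisely why the hypothesis is naturally read as a property of the family $\CC$ across all finite data sets. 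I expect no genuine analytic obstacle: the whole argument rests on recognizing that $e(\F, \cdot)$ is controlled by a discrete nearest-subspace assignment, collapsing a continuous $l$-fold optimization into a finite search over single-subspace minimizers.
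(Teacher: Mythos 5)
Your proof is correct. Note that the paper itself gives no proof of this statement: it is quoted as a known result from \cite{ACM08}. Your argument --- reduce the $l$-fold minimization to a finite minimum over partitions of $\F$ into at most $l$ parts via the nearest-subspace assignment, solve the single-subspace problem on each part, and reassemble the resulting minimizers into $\EV^o$ --- is exactly the standard partition argument underlying that reference, and your two points of care (allowing empty parts with $e_1(\emptyset)=0$, and invoking the $l=1$ hypothesis for every subset $G\subseteq\F$ rather than only for $\F$ itself, i.e.\ reading the hypothesis as the MSAP of the family $\CC$) are precisely the bookkeeping needed to make it airtight.
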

\noindent  This suggests the following definition:
\begin {defn}
\label {DEFMSAP}
A set of closed subspaces $\CC$ of a separable Hilbert space $\HH$ has the Minimum Subspace Approximation Property (MSAP) if for every finite subset $\F\subset \HH$ there exists an element $V\in \CC$ such that minimizes the expression $e(\F,V) = \sum_{f \in \F} d^2(f,V)$ over all $V\in \CC$.
More specifically, we will say that $\CC$ has $MSAP(k)$ for some $k\in \N$ if the previous property holds for all subsets $\F$ of cardinality $m\le k.$
\end{defn}
Using this terminology, Problem \ref {NLproblem} has a minimizing sequence of subspaces if and only if $\CC$ satisfies the MSAP.
 
 \begin{rem} We will see that, in general, $MSAP(k+1)$ is strictly stronger than $MSAP(k)$.
\end{rem}

There are some cases for which it is known that the MSAP is satisfied. For example, $\HH=\C^d$ and $\CC=\{V\subset \HH: \dim V\le s\}$, Eckhard-Young theorem \cite {EY36} implies that $\CC$ satisfies MSAP.  Another example is when  $\HH=L^2(\R^d)$ and $\CC=\overline{\spa \{\phi_1,\dots,\phi_r\}}$  is the set of all shift-invariant spaces  of length at most $r$. For this last example, a result in \cite {ACHM07} implies that $\CC$ satisfies the MSAP. However, a general approach for the existence of a minimizer had so far not been considered. Thus, the main goal of this paper is to provide necessary and sufficient conditions on the class $\CC$ of closed subspaces in $\HH$ so that $\CC$ satisfies the Minimum  Subspace Approximation Property.

\subsection{Organisation}

This paper is organized as follows: Section \ref {CMSAP} will be devoted to the characterization of MSAP in both finite and infinite dimensional Hilbert spaces.  We reformulate Problem \ref {NLproblem} in Section \ref {RP}. The characterization of MSAP is finite dimension is obtained in Section \ref{finitedimChar}. The infinite dimensional case is derived in Section \ref{infinitedimChar}.  A generalization of finitely generated shift-invariant spaces is discussed in Section \ref{GinvariantSection}, and a proof of  MSAP in these generalized spaces is derived. This gives a new and more conceptual proof of the fact that shift-invariant subspaces of length $\leq l$, for any $l$ satisfy MSAP which was proved in \cite{ACHM07}.


\bigskip

\section{Characterization of  MSAP }
\label {CMSAP}

Let $\CC$ be a set of closed subspaces of a separable Hilbert space $\HH$. Our main concern   is to find a topological characterization of the MSAP  of $\CC$. Recall that $\CC$ has MSAP if for every finite subset $\F\subset \HH$, there exists a subspace  $V^o\in \CC$ that minimizes the expression
\begin {equation}
\label {CF0}
e(\F,V)=\sum_{f\in \F}d^2(f,V),
\end {equation}
over all $V \in \CC.$

\subsection{Reformulation in terms of projectors and states}
\label {RP}
Let $\PR\subset \BB$ denotes the set of orthogonal projectors. The   subspaces in $\CC$ will be  identified with a subset of projectors. Specifically, a subspace $V\in \CC$ will be identified  with the orthogonal projector $Q=Q_V$ whose kernel is exactly $V$ (i.e., $Q=I-P_V$ where $P_V$ is the orthogonal projector on $V$). In this way,  we will identify  any set of closed subspaces $\CC$ of $\HH$  with a set of projectors $\{Q \in \PR:\ker (Q)\in \CC\}\subset \PR$. Using this identification,  we will denote this set of projectors by $\CC$ as well, and  express $e$ in \eqref {CF0} as
\begin {equation}
\label {CF}
e(\F,V)=\Phi_\F(Q_V)=\sum_{f\in \F}\la Q_V f,f\ra ,
\end{equation}
where $\Phi_F$ can now be viewed as a linear functional on the set $\BB$ of bounded linear operators on $\HH$.

  The choice of the topology to be considered on the set of bounded linear operators $\BB$ is of crucial importance for analyzing the optima of the functional $\Phi_\F$. Since we are looking for problems of existence of minimizers, compacity will be of great help.  
Hence, the weak operator topology will be the right one considered here. Namely, recall that operators of norm $\leq 1$ form a compact subset for the weak operator topology (this is no longer true with the strong operator or the norm topologies when the dimension is infinite). However, in infinite dimensional spaces, the set or projectors {\it is not}
closed for the weak operator topology. This will create most of the complication that we will have to face in infinite dimension.
In the sequel, a ``closed" subset of $\BB$ will always mean closed for the weak operator topology.

From its expression, $\Phi_\F$ is a continuous linear functional in the weak operator topology of  $\BB$.
Since the set of orthogonal projectors $\PR$ is a bounded set in the uniform topology of $\BB$, it is pre-compact in the \wst\  of $\BB$. Thus it is evident that if the set $\QC \subset \PR$ is  weakly closed in $\BB$, then $\Phi_\F$ attains its minimum (and maximum) for some $V^o\in \CC$, and we get
\begin {prop}
\label {Pclosedprop}
 If the set $\QC \subset \PR$ is  closed in the \wst\ of $\BB$, then $\CC$ satisfies MSAP.
\end {prop}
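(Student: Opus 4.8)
The plan is to exploit compactness directly: the hypothesis that $\QC$ is \wst-closed is exactly what is needed to upgrade the pre-compactness of $\PR$ into genuine compactness of $\QC$, after which the continuity of $\Phi_\F$ finishes the argument. First I would fix an arbitrary finite set $\F \subset \HH$ and recall that every orthogonal projector has operator norm at most $1$, so that $\QC \subset \PR \subset B_1$, where $B_1$ denotes the closed unit ball of $\BB$. By the compactness statement quoted above, $B_1$ is compact in the \wst. Since $\QC$ is by hypothesis closed in the \wst\ and sits inside the \wst-compact set $B_1$, it is a closed subset of a compact space and hence is itself \wst-compact.

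Next I would invoke the continuity of $\Phi_\F$. From its explicit form $\Phi_\F(Q) = \sum_{f \in \F} \la Q f, f\ra$, written as a finite sum of the maps $Q \mapsto \la Q f, f\ra$, the functional $\Phi_\F$ is continuous for the \wst\ on $\BB$ (these maps are continuous by the very definition of the \wst). A real-valued continuous function on a compact space attains its infimum, so there is a projector $Q^o = Q_{V^o} \in \QC$ with $\Phi_\F(Q^o) = \inf_{Q \in \QC} \Phi_\F(Q)$. Translating back through the identification $V \leftrightarrow Q_V$, the subspace $V^o \in \CC$ minimizes $e(\F, V) = \Phi_\F(Q_V)$ over all $V \in \CC$. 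Since $\F$ was an arbitrary finite set, $\CC$ satisfies MSAP.

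There is no serious obstacle here: the proposition is essentially an assembly of standard compactness facts, and the one point deserving care is that $\QC$ itself — and not merely its \wst-closure — is compact. This is precisely where the closedness hypothesis is consumed, compensating for the phenomenon emphasized in the preceding discussion, namely that $\PR$ need not be \wst-closed in infinite dimensions; without closedness the minimizing net could converge to a limit operator that is no longer a projector associated with a subspace in $\CC$. The same argument of course yields a maximizer as well. The genuinely interesting content, developed in the later sections, is the converse analysis: since \wst-closedness is only a sufficient condition, the finite-dimensional convex-hull and infinite-dimensional contact-hull characterizations are introduced to pin down exactly when MSAP holds.
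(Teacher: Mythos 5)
Your proof is correct and follows essentially the same route as the paper: the closed unit ball of $\BB$ is compact in the \wst, so the \wst-closed set $\QC$ is compact, and the \wst-continuous functional $\Phi_\F$ attains its infimum on it. The paper states this in exactly the same way (compactness of the norm-bounded set $\PR$ plus continuity of $\Phi_\F$), only more briefly.
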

 Note that if we let $\alpha:=\sum_{f\in \F}\| f\|^2\ne 0$, then $\alpha^{-1}\Phi_\F$ is a {\em state} on $\BB$, i.e., it is  a linear functional on $\BB$ which
is non-negative on the subset $\Pos$ of non-negative
self-adjoint operators of $\BB$, and which equals $1$ on the
identity operator. We can therefore reformulate MSAP as follows: $\CC$ has MSAP if every state reaches its infimum on $\CC$.

\subsection{A sufficient geometric condition for  MSAP}

Clearly,   it is not necessary for $\CC$ to be  closed in order for $\CC$ to have the MSAP even if $\HH$ has finite dimension. For example, the theorem below gives a sufficient condition for MSAP  in terms of the set $\QC^+=\QC+\Pos$  where $\Pos$ is the set of non-negative self adjoint operators. It also gives a sufficient condition for MSAP  in terms of the convex hull  $\conv(\QC)$ of $\QC$, i.e., the smallest convex set containing $\QC$:

\begin{thm}
\label{topologicalPropMAP}
$$(\QC=\overline{\QC}) \Rightarrow (\QC^+=\overline{\QC^+}) \Rightarrow  (\conv(\QC^+)=\conv(\overline{\QC^+})) \Rightarrow (\textnormal{$\CC$ satisfies $MSAP$}),$$
and the implications are strict in general.
\end{thm}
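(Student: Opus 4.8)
The plan is to prove the three implications from left to right and then treat strictness separately. The \emph{middle} implication is immediate: if $\CC^+=\overline{\CC^+}$, then $\overline{\CC^+}$ may be replaced by $\CC^+$ on both sides, so $\conv(\CC^+)=\conv(\overline{\CC^+})$ holds trivially. The content therefore lies in the first and the last implications, and the key standing facts are that $\Phi_\F$ is a \wst-continuous linear functional and that the unit ball of $\BB$ is compact for the \wst.

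For the first implication I would show that adding the closed convex cone $\Pos$ to a weakly closed family of projectors preserves weak closedness, even though sums of closed sets are not closed in general. Two ingredients make this work: every $Q\in\CC$ satisfies $0\le Q\le I$, so $\CC$ lies in the weakly compact unit ball; and $\Pos$ is itself weakly closed, since self-adjointness and the inequalities $\la Px,x\ra\ge 0$ pass to \wst\ limits. Given a net $T_\alpha=Q_\alpha+P_\alpha\in\CC^+$ with $T_\alpha\to T$, I would extract a subnet along which $Q_\beta\to Q$ weakly; then $Q\in\CC$ by closedness, and $P_\beta=T_\beta-Q_\beta\to T-Q$ weakly, so $T-Q\in\Pos$ and hence $T=Q+(T-Q)\in\CC^+$. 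No norm control on $P_\alpha$ is needed, because its limit is forced as the difference of two convergent nets.

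The last implication is the crux. First I record that $\Phi_\F$ is nonnegative on $\Pos$, so $\Phi_\F(Q+P)\ge\Phi_\F(Q)$ and the value $m:=\inf_{Q\in\CC}\Phi_\F(Q)$ is finite. Taking a minimizing net $Q_\alpha\in\CC$ and a weakly convergent subnet $Q_\beta\to T$, continuity gives $\Phi_\F(T)=m$ with $T\in\overline{\CC}\subseteq\overline{\CC^+}$. The hypothesis now yields $T\in\conv(\overline{\CC^+})=\conv(\CC^+)$, so $T$ is a \emph{finite} convex combination $T=\sum_i\lambda_i(Q_i+P_i)$ with $Q_i\in\CC$, $P_i\in\Pos$, $\lambda_i>0$, $\sum_i\lambda_i=1$. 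Then
$$ m=\Phi_\F(T)=\sum_i\lambda_i\big(\Phi_\F(Q_i)+\Phi_\F(P_i)\big)\ge\sum_i\lambda_i\,\Phi_\F(Q_i)\ge\Big(\sum_i\lambda_i\Big)m=m, $$
using $\Phi_\F(Q_i)\ge m$ and $\Phi_\F(P_i)\ge 0$. Equality throughout forces $\Phi_\F(Q_i)=m$ (and $\Phi_\F(P_i)=0$) for every $i$, so any one of the $Q_i\in\CC$ is a genuine minimizer, giving MSAP. The \textbf{main obstacle} is precisely this last step: a weak cluster point of a minimizing net need only lie in $\overline{\CC}$ and need not be a projector in $\CC$, and the convex-hull hypothesis is exactly what transfers the minimality of the \emph{value} $\Phi_\F(T)$ back to one of the constituent projectors.

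Finally, for strictness I would exhibit explicit families showing that none of the arrows reverses: augmenting by $\Pos$ can close up a non-closed $\CC$, and passing to convex hulls can repair a non-closed $\CC^+$. The most delicate separation is the last one, which (as indicated in the abstract) cannot occur in finite dimensions, where MSAP is characterized exactly by the convex-hull condition; it therefore requires an infinite-dimensional family, reflecting the gap between the convex hull and the finer contact-hull characterization developed later in the paper.
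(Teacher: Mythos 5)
Your proofs of the three implications are correct and follow essentially the same route as the paper. The middle implication is indeed immediate; your net argument for the first implication is precisely the standard proof that a weakly compact set plus a weakly closed set is weakly closed, which is what the paper invokes in one line ($\overline{\QC}$ compact, $\Pos$ closed, hence $\overline{\QC}+\Pos=\overline{\QC^+}$); and for the last implication, your passage from a weak cluster point $T\in\overline{\QC}$ of a minimizing net to a finite convex combination $T=\sum_i\lambda_i(Q_i+P_i)$ with $Q_i\in\QC$, $P_i\in\Pos$, followed by the equality chain $m\ge\sum_i\lambda_i\Phi_\F(Q_i)\ge m$, is a mild repackaging of the paper's argument, which writes the minimizer $R_0\in\overline{\QC}$ as $Q_0+N$ with $Q_0\in\conv(\QC)$, $N\in\Pos$ (via $\conv(\QC^+)=\conv(\QC)+\Pos$) and then uses the observation that a convex combination dominates one of its terms. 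Both versions are sound.

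The genuine gap is the strictness assertion, which is part of the theorem: you describe in general terms what kind of examples would be needed, but you construct none, so this half of the statement remains unproved. The paper exhibits three explicit families. For the last implication: $\CC$ equal to all nonzero finite-dimensional subspaces of an infinite-dimensional $\HH$ has MSAP (any finite $\F$ lies in some member of $\CC$, so the infimum $0$ is attained), yet $0\in\overline{\QC}$ while $0\notin\conv(\QC^+)$, so $\conv(\QC^+)\neq\conv(\overline{\QC^+})$. For the middle implication: in $\ell^2$, with $E_i=\spa\{e_i\}$ and $P_n$ the projector onto $\spa\{e_1+e_n,\,e_2+e_{n+1}\}$, the set $\QC=\{P_n:\,n\geq 3\}\cup\{P_{E_1},P_{E_2}\}$ satisfies $P_n\to Q:=(P_{E_1}+P_{E_2})/2$ weakly with $Q\notin\QC^+$ (so $\QC^+$ is not closed), but $Q\in\conv(\QC)$, which forces $\conv(\QC^+)=\conv(\overline{\QC^+})$. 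For the first implication: in $\R^3$, the family consisting of the plane $\spa\{e_1,e_2\}$ and the lines $\spa\{e_3+ce_2\}$, $c\in\R$, is not closed (the limiting subspace $\spa\{e_2\}$ is missing), yet it satisfies MSAP and $\QC^+$ is closed, the point being that the missing kernel-projector dominates the projector associated to the plane. Note also that your closing remark is slightly off: by the finite-dimensional characterization (Theorem \ref{FinteDNS}), the convex-hull condition and closedness of $\QC^+$ are equivalent in dimension $d<\infty$, so the strictness of the \emph{middle} implication also requires an infinite-dimensional example; only the first implication can be separated in finite dimensions.
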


\begin {proof}
Since $\overline {\QC}$ is compact and $\Pos$ is closed, $\overline {\QC}+\Pos$ is closed. Hence $\QC^+=\overline {\QC}+\Pos=\overline{\QC+ \Pos}=\overline {\QC^+}$, and the first implication follows. The second implication is obvious. For the last implication, let  $F$ be a finite set in $\HH$.
Recall that $\Phi_\F$ is continuous for the weak topology on $\BB$.
We make the following trivial observation:  if $Q \in \BB$ is a convex combination of elements $Q^1, Q^2, \ldots$, then there exists $i$ such that $\Phi_F(Q^i)\leq \Phi_F(Q).$
In particular, we have that $$\inf_{\conv(\overline{\QC})}\Phi_\F=\inf_{\overline{\QC}}\Phi_\F,$$ and since $\overline{\QC}$ is compact, this infimum is attained for some $R_0\in \overline{\QC}$.
By continuity of $\Phi_\F$,   we have $\Phi_\F(R_0)=\inf_{Q\in
\QC}\Phi_\F(Q)=\inf_{Q\in
\overline {\QC}}\Phi_\F(Q)$. Using the assumption $\conv(\QC^+)=\conv(\overline{\QC^+})$, and the easily checked equality  $\conv(\QC^+)=\conv(\QC)+\Pos$, we get that $\conv(\overline{\QC}) \subset \conv(\QC)+\Pos$. Thus we have  that $R_0=Q_0+N$ for some $Q_0 \in \conv(\QC)$ and $N \in \Pos$. It follows that
$$\Phi_\F(Q_0)\le \Phi_\F(Q_0) + \Phi_\F (N)=\Phi_\F(R_0)\le \Phi_\F(Q_0).$$ Hence
$\Phi_\F(Q_0)=\Phi_\F(R_0)=\inf_{Q\in \conv(\QC)}\Phi(Q)$.
But since $Q_0\in \conv(\QC)$, there exists $P_0\in \QC$ such that $\Phi_F(P_0)=\Phi_\F(Q_0)$. This proves that $\CC$ satisfies the MSAP.

\bigskip

 To see why the last implication cannot be reversed, take  $\CC$ to be the set of all finite dimensional subspaces (except the trivial vector space) of some infinite dimensional space $\HH$. It has MSAP since for all finite sets $F$ in $\HH$, one can find a finite dimensional subspace containing it. On the other hand the convex hull of $\QC^+$ does not contains $0$, while the weak closure of $\QC$ does.

\bigskip

To show that the second implication is not an equivalence, let $\HH=\ell^2$ and consider the sequence of vectors: $v_n=e_1+e_n$ which weakly converges to $e_1$, and
the sequence $w_n=e_2+ e_{n+1}$, which weakly converges to $e_2$.
For all $n\geq 3$, let  $P_n$ be the projector on the space spanned by $v_n$ and $w_n$. One checks that the sequence $\{P_n\}$  converges weakly to $Q=(P_{E_1}+P_{E_2})/2$, where $E_1=span \{e_1\}$, and $E_2=span \{e_2\}$. Moreover,  since $P_n=Q+ (P_{E_n}+P_{E_{n+1}})/2,$ where $E_n=\spa \{e_n\}$, and $E_2=\spa \{e_{n+1}\}$, we conclude that $Q< P_n$ for any $n$.
Now define $\QC=\{P_n,\; n\geq 3\}\cup \{P_{E_1},P_{E_2}\}.$
The closure of $\QC$ consists of $\QC\cup \{Q\}$. By the previous remark, $Q$ does not belong to $\QC^+$, so that $\QC^+$ is not closed. But on the other hand  $Q\in \conv(\QC)$, hence $\overline \QC \subset \conv (\QC)$ which implies that $\conv(\QC)=\conv(\overline{\QC})$.  It follows that  $\conv(\QC^+)=\conv(\overline{\QC^+})$.

\bigskip

Finally, to see why the first implication cannot be reverse, let $\HH=\R^3$ and consider the set $\CC=\CC_1\cup \CC_2$ which is the union  of the plane $\CC_1=\spa \{e_1,e_2\}$ and the set of lines $\CC_2= \cup_{v} \{\spa \{v\}: v= e_3+ce_2, \text{for some } c \in \R\}$. Then $\QC$ (identified with  a set of projectors as described earlier) is not closed  (since $Q_{\spa\{e_2\}} \notin \CC$) but $\CC$ satisfies the MSAP, since if the infimum is achieved by the missing line $\spa\{e_2\}$, it is also achieved by the plane $\CC_1$.
\end {proof}

\begin {rem}
For a perhaps more convincing example of why the last implication is not an equivalence, let $\HH=\ell^2$, $\{e_j\}$ its canonical basis, and let $\CC$ be the set of subspaces of co-dimension one except for the space $\spa\{e_1\}^{\perp}$. In other words,  $\QC$ consists of all projectors of rank one except the projector $Q_1$ on $\spa\{e_1\}$. Clearly for any finite subset $\F$ there exist
 (infinitely many) subspaces of $\CC$ on which $\Phi_\F$ vanishes, i.e., $\CC$ has MSAP.  However, $\conv(\QC^+)$ is not closed since $Q_1$ does not belong to it, while being in the closure of $\QC$.
 \end {rem}

\subsection{Characterization of MSAP in finite dimension}\label{finitedimChar}

 The fact that the first implication in Theorem \ref {topologicalPropMAP}  is not an equivalences was shown using an example in a finite dimensional space, while the other two examples showing the the other two implications are not equivalence were obtained in  an infinite dimensional space. It turns out that  for finite dimensional spaces the last two implications are in fact equivalences, and that for a Hilbert space $\HH$ of dimension $d<\infty$, MSAP is also equivalent to $MSAP(d-1)$ (see Definition \ref {DEFMSAP}). We have
 \begin {thm} Suppose $\HH$ has dimension $d$. Then the following are equivalent
 \begin{itemize}
\item[(i)] $\CC$ satisfies MSAP(d-1);
\item[(ii)] $\CC$ satisfies MSAP;
\item[(iii)]  $\conv(\QC^+)=\conv(\overline{\QC^+})$;
\item[(iv)]  $\QC^+$ is  closed.
 \end{itemize}
 \label {FinteDNS}
  \end {thm}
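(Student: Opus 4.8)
The plan is to close the cycle of implications. Applied in finite dimension, where the weak operator topology coincides with the usual topology on $\BB$, Theorem~\ref{topologicalPropMAP} already gives (iv)$\Rightarrow$(iii)$\Rightarrow$(ii): here ``$\QC^+$ closed'' is exactly the clause $\QC^+=\overline{\QC^+}$ of that theorem. Since MSAP is MSAP$(k)$ for every $k$, the implication (ii)$\Rightarrow$(i) is immediate. Thus everything reduces to the single implication (i)$\Rightarrow$(iv), which I would prove by contraposition. First I would record two reformulations. Writing $T_\F=\sum_{f\in\F} f\otimes f$ for the positive operator with $\Phi_\F(Q)=\trace(Q\,T_\F)$, one has $\rank T_\F\le|\F|$, and conversely every positive self-adjoint $T$ with $\rank T\le k$ equals some $T_\F$ with $|\F|=\rank T\le k$ (diagonalize $T$ and absorb the eigenvalues into the vectors). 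Hence MSAP$(d-1)$ is equivalent to: every positive self-adjoint $T$ with $\rank T\le d-1$ has $\trace(\,\cdot\,T)$ attaining its infimum over $\QC$. Next, since $\overline{\QC}$ is compact and $\Pos$ is closed, one gets $\overline{\QC^+}=\overline{\QC}+\Pos$, and because $A=Q+N$ with $N\ge 0$ means precisely $A\ge Q$, one may write $\QC^+=\{A:\ A\ge Q\text{ for some }Q\in\QC\}$.

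Assume now that $\QC^+$ is not closed and pick $R\in\overline{\QC^+}\setminus\QC^+$. Using the first reformulation above I would write $R=Q_0+N_0$ with $Q_0\in\overline{\QC}$ and $N_0\in\Pos$, so $R\ge Q_0$. Since $R\notin\QC^+$ there is no $Q\in\QC$ with $R\ge Q$; in particular $Q_0\notin\QC$, so $Q_0\in\overline{\QC}\setminus\QC$. I would then check $Q_0\ne 0$: a sequence of projectors converging to $0$ has $\trace(Q_n)\to 0$, and as these traces are non-negative integers they vanish for large $n$, forcing $0\in\QC$, which is excluded. Consequently the positive operator $T:=\Id-Q_0$, the orthogonal projector onto $\Ker Q_0$, satisfies $\rank T=\dim\Ker Q_0=d-\rank Q_0\le d-1$.

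It then remains to show that $\trace(\,\cdot\,T)$ does not attain its infimum over $\QC$, which by the reformulation makes MSAP$(d-1)$ fail. For every projector $Q$ one has $\trace(QT)\ge 0$, while choosing $Q_n\in\QC$ with $Q_n\to Q_0$ gives $\trace(Q_nT)\to\trace\big(Q_0(\Id-Q_0)\big)=0$, so $\inf_{\QC}\trace(\,\cdot\,T)=0$. If this infimum were attained at some $Q\in\QC$, then $\trace\big(Q(\Id-Q_0)\big)=0$ with $Q,\Id-Q_0\ge 0$ would force $Q(\Id-Q_0)=0$, i.e.\ $\Ker Q_0=\operatorname{range}(\Id-Q_0)\subseteq\Ker Q$, equivalently $Q\le Q_0$. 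But then $R\ge Q_0\ge Q$ with $Q\in\QC$ would yield $R\in\QC^+$, contradicting the choice of $R$. Hence the infimum is not attained, MSAP$(d-1)$ fails, and (i)$\Rightarrow$(iv) follows.

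The substantive step, and the only place where genuine work occurs, is the construction in the last two paragraphs: recognizing that the failure of closedness is witnessed by a single limiting projector $Q_0$ lying below $R$ but above no element of $\QC$, and that $T=\Id-Q_0$ both has rank $\le d-1$ and exposes the gap, because attaining the infimum is equivalent to the existence of some $Q\in\QC$ with $Q\le Q_0$. The one technical subtlety I would be careful about is the rank bound, which rests on $Q_0\ne 0$; this is precisely where finite dimensionality enters, through the integrality of the trace of a projector.
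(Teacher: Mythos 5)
Your proof is correct and is essentially the paper's own argument run in contrapositive form: the paper applies MSAP$(d-1)$ to a basis $\F$ of $\ker R_0$ for an arbitrary nonzero $R_0\in\overline{\QC}$, obtains $Q_0\in\QC$ with $Q_0\le R_0$, and concludes $\overline{\QC}\subset \QC^+$, so that $\QC^+=\overline{\QC}+\Pos$ is closed, whereas you extract the same configuration from a hypothetical witness of non-closedness. All the substantive ingredients coincide: the functional built from the kernel of a limiting projector (your $\trace(\,\cdot\,(\Id-Q_0))$ is the paper's $\Phi_\F$), the equivalence between attainment of its infimum $0$ and the existence of $Q\in\QC$ with $Q\le Q_0$, the rank-integrality argument disposing of the zero projector, and the compact-plus-closed argument.
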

 \begin{proof}
Clearly (ii) implies (i), and (iv) implies (iii). By Theorem \ref{topologicalPropMAP}, (iii) implies (ii). So we are left to show that (i) implies (iv).
Since for finite dimensions,  the set of projectors of rank $k$ is closed, the zero projector belongs to $\QC$ if and only if it belongs to $\overline{\QC}.$ 
Note that the case where $\QC$ contains the zero projector is trivial, since this implies that $\QC^+$ is all of $\Pos$.  We shall therefore omit this case. Now let $R_0\in \overline{\QC}$ be a non-zero projector, and let
 $\F\subset \HH$ be a basis for
$\ker R_0$. Since $R_0$ is non-zero,  $\ker R_0$ has dimension $\leq d-1$  so that $\# (\F)\le d-1$. Then $\Phi_\F(R_0)= 0=\inf_{Q\in
\QC}\Phi_\F(Q)$. Since $\CC$ satisfies MSAP(d-1) there exists $Q_0\subset \QC$ such that $\Phi_\F(Q_0)=0$. Hence $\ker R_0\subset \ker Q_0$, so that $Q_0 \le R_0$, i.e., $R_0=Q_0 +N$ for some $N \in \Pos$.  Thus, $ \overline{\QC}\subset \QC^+=\QC+\Pos$. We get that  $ \overline{\QC} +\Pos\subset \QC+\Pos+\Pos=\QC^+$. Since $\Pos$ is closed and $ \overline{\QC} $ is compact, it follows that $ \overline{\QC} +\Pos $ is closed. Hence $\QC^+$ is closed.
 \end{proof}


 \bigskip
\begin {rem}
\noindent For spaces of dimension $d< \infty$, it is easy to show that MSAP(k-1) is strictly weaker than MSAP(k) for $1\le k \le d-1$. To see this, let $\mathcal C$ be the set of all subspaces of dimension $k$ except for the subspace generated by $k$ independent vectors $\{v_1,\dots,v_k\}.$ The $\mathcal C$ has MSAP(k-1), but it does not satisfy MSAP(k).
\end {rem}

\subsection{Characterization  of MSAP in infinite dimension}\label{infinitedimChar}

Let $X$ be a locally convex topological vector space. The example that we have in mind here is $\BB$ equipped with the weak-operator topology.
\begin{defn}
We define a closed half-space to be a set of the form $H_{\phi,a}=\{x\in X: \,  \phi(x)\geq a\}$, for some $a\in \R$ and $\phi$ an $\R$-linear functional on $X$.  The boundary of such half-space is the (affine) hyperplane of equation $\phi(x)=a.$
\end{defn}
For the reminder of this paper, we will use half-space to mean closed half space.
 Using this terminology, a direct application of Hahn-Banach theorem implies that the closed convex hull $\overline {\conv}(E)$ of a subset $E$ is the intersection of all half-spaces containing $E$.  Let us introduce a slightly different notion, called contact hull.
\begin{defn}
The contact hull $T(E)$ of a subset $E$ of $X$ is the intersection of all half spaces containing $E$ and whose boundary intersects $E$ non-trivially. Such a half-space is called contact half-space of $E.$ The set of contact half-spaces containing $E$ is denoted by $\T(E)$.
\end{defn}

\begin{rem} Note that if the set of all contact half-spaces of $E$ is empty, then $T(E)=X$. Clearly the contact hull is closed and the closed convex hull is contained in the contact hull. However, the converse is not true.
\end{rem}

Now, we are ready to prove a geometric characterization of MSAP. Here the locally convex space $X$ is $\BB$ equipped with the weak-operator topology.

\begin{thm}
Let $\QC$ be a set of projector in $\BB.$ Then $\QC$ has MSAP if and only if $$\T(\QC^+)= \T\big(\overline{\QC^+}\big).$$
\end{thm}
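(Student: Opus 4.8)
The plan is to prove the equivalence by reducing both directions to a single inclusion of contact half-space families and then translating that inclusion, via a representation of functionals, into the statement that every $\Phi_\F$ attains its infimum on $\QC$. First I would record two preliminary observations. Since a (closed) half-space $H$ containing $\QC^+$ automatically contains the weak closure $\overline{\QC^+}$, and since $\partial H\cap\QC^+\neq\emptyset$ forces $\partial H\cap\overline{\QC^+}\neq\emptyset$, one always has $\T(\QC^+)\subseteq\T(\overline{\QC^+})$. Hence the asserted equality is equivalent to the reverse inclusion $\T(\overline{\QC^+})\subseteq\T(\QC^+)$, and this is the only thing to prove. Next I would reinterpret contact half-spaces analytically: a half-space $H_{\phi,a}$ containing a set $E$ with $\partial H_{\phi,a}\cap E\neq\emptyset$ is precisely one for which $a=\inf_{x\in E}\phi(x)$ and this infimum is attained on $E$. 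Thus a contact half-space of $\overline{\QC^+}$ is the same data as a weakly continuous $\R$-linear functional $\phi$ whose infimum over $\overline{\QC^+}$ is attained, and it lies in $\T(\QC^+)$ exactly when that same infimum (which is unchanged by passing to the closure) is attained already on $\QC^+$.

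The key step is to identify which functionals occur, using the structure $\QC^+=\QC+\Pos$. If $H_{\phi,a}\supseteq\overline{\QC^+}$, then fixing any $Q\in\QC$ and letting $N$ range over the cone $\Pos$, the inequality $\phi(Q)+t\phi(N)\ge a$ for all $t\ge 0$ forces $\phi(N)\ge 0$; hence every such $\phi$ is a positive functional. Conversely, for positive $\phi$ one has $\inf_{\overline{\QC^+}}\phi=\inf_{\overline{\QC}}\phi$, which is attained because $\overline{\QC}$ is weakly compact and $\phi$ is weakly continuous. Therefore the functionals producing contact half-spaces of $\overline{\QC^+}$ are exactly the positive, weakly continuous $\R$-linear functionals on $\BB$. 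Finally I would invoke the representation of such functionals: a weakly continuous positive functional is $T\mapsto\trace(T\sigma)$ for a finite-rank $\sigma\ge 0$, and diagonalizing $\sigma=\sum_j\lambda_j|g_j\rangle\langle g_j|$ exhibits $\phi=\Phi_\F$ with $\F=\{\sqrt{\lambda_j}\,g_j\}$. In other words, the admissible functionals are precisely the $\Phi_\F$ attached to finite data sets $\F$.

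With this dictionary the equivalence follows in both directions. Recall that $\alpha^{-1}\Phi_\F$ is a state, and that for positive $\phi$ one has $\inf_{\QC^+}\phi=\inf_{\QC}\phi$; moreover if $\phi$ attains its infimum at $Q+N\in\QC^+$ then, comparing with $Q\in\QC$, it already attains it at $Q$. Hence $\Phi_\F$ attains its infimum on $\QC^+$ if and only if it attains it on $\QC$, which is the defining condition of MSAP. Thus if $\QC$ has MSAP, every $\phi=\Phi_\F$ attaining its infimum on $\overline{\QC^+}$ also attains it on $\QC^+$, so every contact half-space of $\overline{\QC^+}$ is a contact half-space of $\QC^+$, giving $\T(\QC^+)=\T(\overline{\QC^+})$. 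Conversely, if $\T(\overline{\QC^+})\subseteq\T(\QC^+)$, then for each finite $\F$ the half-space $H_{\Phi_\F,a}$ with $a=\inf_{\overline{\QC^+}}\Phi_\F$ is a contact half-space of $\overline{\QC^+}$, hence of $\QC^+$, so $\Phi_\F$ attains its minimum on $\QC$; thus $\QC$ has MSAP.

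I expect the main obstacle to be the functional-identification step: pinning down that the weakly continuous $\R$-linear functionals producing contact half-spaces of $\overline{\QC^+}$ are exactly the finitely supported positive functionals $\Phi_\F$. This requires both the cone argument forcing positivity and the representation of weakly continuous positive functionals by finite-rank positive operators, together with some care about working with real-valued functionals on self-adjoint operators and about degenerate cases such as the zero functional or an empty $\QC$.
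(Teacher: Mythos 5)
Your proof is correct and follows essentially the same route as the paper's: both directions rest on the weak compactness of $\overline{\QC}$, the positivity on $\Pos$ of any functional defining a contact half-space of $\overline{\QC^+}$, the representation of such weakly continuous positive functionals as $\Phi_\F$ for a finite set $\F$ (the paper's embedded Claim, your trace/diagonalization step), and the observation that positivity lets attainment pass between $\QC^+$ and $\QC$. Your write-up merely makes explicit two points the paper leaves implicit --- the trivial inclusion $\T(\QC^+)\subseteq\T(\overline{\QC^+})$ and the cone argument forcing positivity --- which is a presentational improvement, not a different method.
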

\begin{proof} First note that  a $\R$-linear  functional $\phi\in \BB^*$ reaches its minimum $a\in \R$ on some subset $E$  if and only if  $\{x, \phi(x)\geq a\}$ is a contact half-space of $E$.

\bigskip

 Let us show that  $\T(\QC^+)=\T\big (\overline{\QC^+}\big)$ implies MSAP.  Consider some $\R$-linear functional  $\Phi_{\F}$ for some finite set $\F$. By definition of the weak operator topology on $\BB$, $\Phi_{\F}$ is continuous for this topology. Hence the infimum $m$ of $\Phi_{\F}$ on $\QC^+$ coincides with its infimum on $\overline{\QC^+}=\overline{\QC}^+$. But the infimum $m$ on  $\overline{\QC}^+$ is achieved on $\overline{\QC}$, since  $\overline{\QC}$ is compact. Thus, $H_{\Phi_{\F} ,m} \in \T(\overline{\QC^+}) .$ But saying that  $\T(\QC^+)=\T(\overline{\QC}^+)$ exactly means that the contact half-spaces to $\QC^+$ and those of $\overline{\QC}^+$ coincide. This  means that $H_{\Phi_{\F} ,m} \in \T(\QC^+),$ and hence $\Phi_{\F}$ reaches its minimum on $\QC^+$, and therefore on $\QC$.

\bigskip

 To prove the converse, we will need the following well-known (but crucial) observation. In terms of states, it says that states which are continuous for the weak operator topology are convex combination of vector states, i.e.,  a convex combinations of functionals on $\BB$ of the form $\phi= \la \cdot f, f\ra$ where $f \in \HH$ and $\|f\|=1.$

\begin{clai}
For a real or a complex Hilbert space, the restriction  to $\Pos$ of an element $\phi\in \BB^*$ which takes non-negative values on $\Pos$ can be written as  $\phi(A)=\Phi_\F(A)$ for some finite set $\F$, and for all $A\in \Pos$.
\end{clai}

\begin{proof}
Since $\phi$ is a continuous functional on $\BB$ endowed with the weak operator topology, $\phi$ must be of the form $$ \phi(A)= \sum\limits_{i\in I} \langle Av_i,w_i  \rangle$$ for some vectors $\{v_i,w_i: i \in I\}$ in $\HH$, where $I$ is a finite set. Let $M:=\spa \{v_i,w_i:  i\in I\}$ and the operator $A_M:=\Pi_MA\Pi_M$ where $\Pi_M$ is the orthogonal projector on $M.$ Define the operator   $\phi_M(A_M)=\phi(A)$ for all $A \in \BB$.  Let $\{e_i\}$ be an orthonormal basis for $M$. Then $\phi_M(B)=\sum_{i,j}\lambda_{ij}  \langle Be_i,e_j  \rangle$ for all $B\in \mathcal B (M)$. This can be written as $\trace(\Sigma \Omega)$, where $\Sigma$ is the matrix with entries $\lambda_{ij}$ and $\Omega$ is the matrix whose entries are $\langle Be_j,e_i  \rangle$. Since $\phi$ is nonnegative on $\Pos$ it follows that for any nonnegative definite matrix $\Omega$
$$\trace(\Sigma \Omega)=\trace\big((\frac {\Sigma+\Sigma^*}{2})\Omega\big)=\trace \Lambda S^*\Omega S,$$
where $\Lambda$ is the matrix of eigen values of $(\frac {\Sigma+\Sigma^*}{2})$ and $S$ is the matrix of its eigen vectors. Therefore, $\phi(A)=\Phi_\F(A)$ for all $A\in \Pos$, where the vectors $\F=\{f_1,\dots,f_m\}$ are given by $f_i=\sigma^{1/2}\sum_jS_{ij}e_j$.
\end{proof}



\bigskip

 Now suppose that $\QC$ has MSAP. Let $H_{\phi,a}=\{x, \phi(x)\geq a\}$ be a contact half-space to $\overline{\QC}^+$. Note that this implies that $\phi\geq 0$ on $\Pos$. Therefore, on $\Pos$,  $\phi=\Phi_{\F}$ for some $\F$.
As $\QC$ has  MSAP by assumption, $\Phi_\F$  reaches its minimum on $\QC$, hence on $\QC^+$. Therefore, the above half-space  $H_{\phi,a}$ is a contact half space  to $\QC^+$. 
\end{proof}

\begin{cor}
If $\CC$ has the MSAP, then $T(\QC^+)=\overline {\conv}(\QC^+).$
\end{cor}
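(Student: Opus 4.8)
The plan is to sandwich $T(\QC^+)$ between $\overline{\conv}(\QC^+)$ and itself, using the theorem just proved only to replace $\QC^+$ by its weak closure where that is convenient. One inclusion is free: every contact half-space of $\QC^+$ is in particular a half-space containing $\QC^+$, so $T(\QC^+)$ is cut out by a subcollection of the half-spaces defining $\overline{\conv}(\QC^+)$, whence $\overline{\conv}(\QC^+)\subseteq T(\QC^+)$ always, as already recorded in the remark. The entire content is therefore the reverse inclusion $T(\QC^+)\subseteq\overline{\conv}(\QC^+)$, and this is the only place MSAP enters.

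First I would invoke the preceding theorem: MSAP yields $\T(\QC^+)=\T(\overline{\QC^+})$, and since the hull is by definition the intersection of the half-spaces in the corresponding collection, equality of the collections gives $T(\QC^+)=T(\overline{\QC^+})$. Because passing to the closed convex hull absorbs closures, $\overline{\conv}(\overline{\QC^+})=\overline{\conv}(\QC^+)$. It therefore suffices to prove the unconditional statement that for the closed set $E=\overline{\QC^+}=\overline{\QC}+\Pos$ one has $T(E)=\overline{\conv}(E)$; equivalently, that every half-space containing $E$ can be tightened to a contact half-space of $E$.

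The key step is to show that every $\R$-linear functional $\phi$ bounded below on $E$ attains its infimum on $E$. Here the cone structure of $\Pos$ does the work: if $\phi$ were negative at some point of $\Pos$, then, $\Pos$ being a cone, adding large positive multiples of that point would push $\phi$ to $-\infty$ along $\overline{\QC}+\Pos$, contradicting boundedness below; hence $\phi\geq 0$ on $\Pos$, so $\inf_{\Pos}\phi=\phi(0)=0$ and $\inf_E\phi=\inf_{\overline{\QC}}\phi$. The latter is attained because $\overline{\QC}$ is compact for the weak operator topology. Consequently, given any half-space $H_{\phi,a}\supseteq E$, the value $m=\inf_E\phi\geq a$ produces a contact half-space $H_{\phi,m}$ (its boundary meets $E$ at a minimizer, which by the observation opening the previous proof is exactly the contact condition) satisfying $E\subseteq H_{\phi,m}\subseteq H_{\phi,a}$. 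Intersecting over all half-spaces containing $E$ gives $T(E)\subseteq\overline{\conv}(E)$, and with the free inclusion, $T(E)=\overline{\conv}(E)$; chaining the three equalities above finishes the proof.

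I expect the main obstacle to be the verification in the key step that the infimum over $E$ is genuinely \emph{attained} rather than merely approached — the precise point where positivity of $\phi$ on $\Pos$ (forced by the cone) and compactness of $\overline{\QC}$ must be combined to reduce the infimum over the noncompact set $E$ to an infimum over the compact $\overline{\QC}$. Everything else is bookkeeping: the free inclusion, the identity $\overline{\conv}(\overline{\QC^+})=\overline{\conv}(\QC^+)$, and the translation between ``boundary meets $E$'' and ``infimum attained'' already isolated at the start of the theorem's proof.
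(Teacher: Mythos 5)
Your proof is correct and follows essentially the same route as the paper's: both reduce to the closed set $\overline{\QC^+}=\overline{\QC}+\Pos$ via the preceding theorem together with the identity $\overline{\conv}(\QC^+)=\overline{\conv}(\overline{\QC^+})$, and both rest on the same key fact that a functional bounded below on $\overline{\QC}+\Pos$ must be nonnegative on the cone $\Pos$ and hence attains its infimum on the compact set $\overline{\QC}$, producing a contact half-space at the infimum level. The only difference is presentational: the paper argues by contradiction, using Hahn--Banach to strictly separate a putative point of $T(\QC^+)\smallsetminus\overline{\conv}(\QC^+)$ and exhibiting a contact half-space excluding it, whereas you run the dual, direct version of the very same argument, tightening each half-space containing the set to a contact half-space and intersecting.
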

\begin{proof}
Clearly $\overline {\conv}(\QC^+)\subset T(\QC^+)$. Moreover, as a general fact, $\overline {\conv}(\QC^+)=\overline {\conv}(\overline{\QC^+})$.
And since $\CC$ has MSAP, the previous theorem implies that $T(\QC^+)=T(\overline{\QC^+})$. Hence, without loss of generality, we can assume that $\QC$ is compact.

\bigskip
\noindent Now suppose that $x\in T(\QC^+)\smallsetminus \overline{\conv}(\QC^+)$. By Hahn-Banach's theorem there exists a weak-continuous $\R$-functional $\phi\in \BB^*$ such that $\phi(x)< \phi(y)$ for all $y\in \overline{\conv}(\QC^+)$.  Let $\lambda:=\inf_{\overline{\conv}(\QC^+)}\phi$. We have that $\lambda=\inf_{\overline{\conv}(\QC^+)}\phi=\inf_{\conv(\QC^+)}\phi=\inf_{\QC^+}\phi=\inf_{\QC}\phi.$ Since $\CC$ is compact we have get that  $H_{\phi, \lambda}$ is a contact half-space to $\QC^+$.  In other words, $H_{\phi, \lambda}$ is a contact half-space containing $\QC^+$, but not $x$. This is in contradiction with the fact that $x\in T(\QC^+)$.
\end{proof}

\section{$\pi(G)$-invariant subspaces and MSAP}\label{GinvariantSection}

\subsection{Projectors of finite \corank}

Let $\HH$ denote a separable Hilbert space, and let, as before $\Pi=\PR$ be
the set of projectors on $\HH$, equipped with the topology of weak
convergence. Let $\mathcal P^+_1$ denote the set of positive
self-adjoint operators of norm $\leq 1$. Note that  $\Pi$ is
 closed for the strong topology but not for the weak one,
unless the dimension is finite, in which case $\Pi$ is compact.
Namely,  we have

\begin{prop}\label{weakProp}
Let $\Pi_k$ (resp. $\Pi_{-k}$) be the set of projectors of rank $k$ (resp. of corank $k$). If $\HH$ is infinite dimensional, then
\begin{enumerate}
\item The weak  closure of $\Pi_k$ is the set of positive self-adjoint operators of norm $\leq 1$ and of rank at most $k$.

\item The weak closure of $\Pi_{-k}$ is the set of positive self-adjoint operators $q$ of norm $\leq 1$ such that $qx= x$ on a subspace $V$ of codimension at most $k$ (i.e., $\dim V^\perp\le k$). In other words, $q\in \overline{\Pi_{-k}}$ if and only if  $q$ is a positive operators of norm $\|q\|\leq 1$ such that $q \geq p$ for some $p\in \Pi_{-k}.$

\end{enumerate}
\end{prop}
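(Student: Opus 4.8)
The plan is to prove (1) by a double inclusion and then deduce (2) from (1) by passing to orthocomplements, since $P\mapsto I-P$ is a homeomorphism of $\BB$ for the weak operator topology that exchanges $\Pi_k$ with $\Pi_{-k}$ (indeed $P\in\Pi_{-k}$ iff $I-P\in\Pi_k$).

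For the inclusion $\overline{\Pi_k}\subseteq\{q:0\le q\le I,\ \rank q\le k\}$, suppose $P_n\in\Pi_k$ and $P_n\to q$ weakly. Positivity and $q\le I$ pass to the limit because $0\le\la P_nf,f\ra\le\|f\|^2$ for every $f$. For the rank bound I would fix any finite family $g_1,\dots,g_m$ and observe that the matrix $(\la q g_i,g_j\ra)_{i,j}$ is the entrywise limit of $(\la P_n g_i,g_j\ra)_{i,j}$, each of which is a compression of the rank-$k$ operator $P_n$ and hence has rank $\le k$. Since the set of $m\times m$ matrices of rank $\le k$ is closed (it is cut out by the vanishing of all $(k+1)$-minors), the limit matrix has rank $\le k$. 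If $q$ had rank $\ge k+1$, choosing $g_1,\dots,g_{k+1}$ to be orthonormal eigenvectors of $q$ for nonzero eigenvalues would make $(\la q g_i,g_j\ra)$ a nonsingular diagonal $(k+1)\times(k+1)$ matrix, a contradiction; hence $\rank q\le k$.

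The reverse inclusion is the crux of the argument and the step I expect to be the main obstacle, as it is exactly where infinite dimensionality is essential. Given $q$ with $0\le q\le I$ and $r:=\rank q\le k$, diagonalize $q=\sum_{i=1}^r\lambda_i\la\cdot,\phi_i\ra\phi_i$ with $\{\phi_i\}$ orthonormal and $0<\lambda_i\le1$, and write $\lambda_i=\cos^2\theta_i$. Using $\dim\HH=\infty$, I would pick, for $i=1,\dots,k$, mutually orthogonal orthonormal vectors $e_n^{(i)}\in(\spa\{\phi_1,\dots,\phi_r\})^\perp$, each weakly null as $n\to\infty$, and set $\psi_n^{(i)}=\cos\theta_i\,\phi_i+\sin\theta_i\,e_n^{(i)}$ for $i\le r$ and $\psi_n^{(i)}=e_n^{(i)}$ for $r<i\le k$. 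The $\psi_n^{(i)}$ are orthonormal, so $P_n:=\sum_{i=1}^k\la\cdot,\psi_n^{(i)}\ra\psi_n^{(i)}\in\Pi_k$, and a direct computation using $\la e_n^{(i)},f\ra\to0$ gives $\la P_nf,g\ra\to\sum_{i=1}^r\cos^2\theta_i\la f,\phi_i\ra\la\phi_i,g\ra=\la qf,g\ra$, i.e. $P_n\to q$ weakly. The point is that a genuine rank-$k$ projector can be ``tilted'' along a weakly vanishing direction so that, in the limit, its eigenvalues drop from $1$ to the prescribed values $\lambda_i\in(0,1]$; this is impossible in finite dimensions, consistent with the hypothesis.

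Finally, to obtain (2) I would use that $q\in\overline{\Pi_{-k}}$ iff $I-q\in\overline{\Pi_k}$, which by (1) means $I-q$ is a positive contraction with $\rank(I-q)\le k$; equivalently $0\le q\le I$ and $q=I$ on the subspace $V:=\ker(I-q)=(\text{range}(I-q))^\perp$ of codimension $\le k$. It then remains to match this with the order condition ``$q\ge p$ for some $p\in\Pi_{-k}$''. In one direction, with $W=\text{range}(I-q)$ one has $0\le I-q\le P_W$ because $I-q$ is a positive contraction supported on $W$, whence $q\ge I-P_W=P_{W^\perp}=P_V$, and shrinking $V$ if needed produces $p\in\Pi_{-k}$ with $q\ge p$. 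Conversely, if $q\ge p$ with $p\in\Pi_{-k}$ then $0\le I-q\le I-p$ with $I-p$ a rank-$k$ projection, and comparing quadratic forms on $\ker(I-p)$ forces $\text{range}(I-q)\subseteq\text{range}(I-p)$, so $\rank(I-q)\le k$ and (1) applies. The only mildly delicate points here are these order comparisons, which all reduce to the elementary fact that $0\le A\le B$ with $B$ a projection forces $\text{range}(A)\subseteq\text{range}(B)$.
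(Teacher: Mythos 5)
Your proposal follows essentially the same route as the paper's proof: the easy inclusion is obtained from weak closedness of the positive contractions of rank $\le k$ (the paper phrases this via vanishing of $(k+1)$-subdeterminants, you via finite compressions --- the same mechanism), the hard inclusion uses the identical ``tilting'' construction (the paper's $v_n(i)=|t_i|e_i+s_ie_{ni+n(k+1)}$ is your $\psi_n^{(i)}=\cos\theta_i\,\phi_i+\sin\theta_i\,e_n^{(i)}$), and (2) is deduced from (1) by the involution $q\mapsto I-q$. In fact you do more than the paper here: the paper dismisses (2) with ``the two statements are equivalent,'' whereas you verify both the passage to orthocomplements and the equivalence of the two formulations inside (2) (the condition $q\ge p$, $p\in\Pi_{-k}$, versus $\rank(I-q)\le k$), using the correct elementary fact that $0\le A\le B$ with $B$ a projector forces $\textnormal{range}(A)\subseteq\textnormal{range}(B)$.

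One step, as written, is faulty but immediately repairable. To conclude $\rank q\le k$ from the fact that every compression $\bigl(\la qg_i,g_j\ra\bigr)_{i,j}$ has rank $\le k$, you propose choosing $g_1,\dots,g_{k+1}$ to be ``orthonormal eigenvectors of $q$ for nonzero eigenvalues.'' At that stage $q$ is only known to be a positive contraction; if $\rank q=\infty$ it may have no eigenvectors at all (e.g.\ multiplication by $t$ on $L^2[0,1]$), so such $g_i$ need not exist. The fix uses only what you already established: since $\la qg_i,g_j\ra=\la q^{1/2}g_i,q^{1/2}g_j\ra$ is the Gram matrix of $q^{1/2}g_1,\dots,q^{1/2}g_{k+1}$, and since $\rank q\ge k+1$ would imply $\rank q^{1/2}\ge k+1$, one could choose the $g_i$ with $q^{1/2}g_1,\dots,q^{1/2}g_{k+1}$ linearly independent, making that Gram matrix nonsingular --- contradicting that all compressions have rank $\le k$. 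Equivalently, take the $g_i$ among an orthonormal basis and invoke the fact that a positive semidefinite matrix all of whose principal $(k+1)\times(k+1)$ submatrices are singular has rank $\le k$; this is precisely the paper's determinant lemma. With this one substitution your argument is complete.
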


To prove Proposition \ref {weakProp} we need the following Lemma:
\begin{lem}
The operators of rank $\leq k$ form a weak closed subset of $\BB.$
\end{lem}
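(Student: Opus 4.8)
The plan is to detect the condition $\rank(A)\le k$ through the simultaneous vanishing of all $(k+1)\times(k+1)$ determinants built from matrix coefficients of $A$, each of which is manifestly weakly continuous, and then to exhibit the set of rank $\le k$ operators as an intersection of weakly closed sets.

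First I would record the linear-algebra characterization: for $A\in\BB$ one has $\rank(A)\le k$ if and only if for every choice of vectors $u_0,\dots,u_k$ and $v_0,\dots,v_k$ in $\HH$ the quantity $D(A;u,v):=\det\big[\la Au_i,v_j\ra\big]_{i,j=0}^{k}$ vanishes. The forward direction is immediate: if $\rank(A)\le k$, then the $k+1$ vectors $Au_0,\dots,Au_k$ lie in a subspace of dimension $\le k$ and are therefore linearly dependent, say $\sum_i c_i Au_i=0$ with the $c_i$ not all zero; applying $\la\,\cdot\,,v_j\ra$ shows that the rows of the matrix $\big[\la Au_i,v_j\ra\big]$ satisfy a nontrivial linear relation, so $D(A;u,v)=0$. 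For the converse, if $\rank(A)\ge k+1$ one may pick $u_0,\dots,u_k$ with $Au_0,\dots,Au_k$ linearly independent; since linearly independent vectors span a $(k+1)$-dimensional subspace admitting a dual basis, one can then choose $v_0,\dots,v_k$ with $\la Au_i,v_j\ra=\delta_{ij}$, whence $D(A;u,v)=1\ne 0$.

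Second, I would observe that for each fixed tuple $(u,v)$ the map $A\mapsto D(A;u,v)$ is continuous for the weak operator topology: by the very definition of that topology each coefficient $A\mapsto\la Au_i,v_j\ra$ is a continuous scalar functional, and $D$ is a fixed polynomial in these $(k+1)^2$ coefficients, hence a continuous function of $A$. Consequently each set $\{A\in\BB:\,D(A;u,v)=0\}$ is weakly closed, being the preimage of the closed set $\{0\}$ under a weakly continuous map.

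Finally, the characterization expresses the set of operators of rank $\le k$ as the intersection $\bigcap_{u,v}\{A:\,D(A;u,v)=0\}$ taken over all admissible tuples, and an arbitrary intersection of closed sets is closed; this gives the claim, and notably no norm bound on $A$ is required. The only step demanding a little care — the main obstacle, such as it is — is the converse of the characterization, namely producing the dual vectors $v_j$ that witness $D\ne 0$ once the rank is at least $k+1$; this is routine finite-dimensional linear algebra but should be spelled out rather than asserted.
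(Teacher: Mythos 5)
Your proof is correct and follows essentially the same route as the paper: both characterize $\rank(A)\le k$ by the vanishing of all $(k+1)\times(k+1)$ sub-determinants of matrix coefficients of $A$, note that each such determinant is a polynomial in weakly continuous functionals and hence weakly continuous, and conclude by writing the set as an intersection of closed sets. Your version, using arbitrary test vectors rather than coefficients in a fixed orthonormal basis, merely spells out the linear-algebra details (dependence of rows, construction of dual vectors) that the paper leaves implicit.
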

\begin {proof} [Proof of Lemma] This follows from the fact that an operator $x$ has a rank less than or equal to $ k$ if and only if all sub-determinants of size $k+1$ of
$x$ vanish (for instance if $x$ is expressed as a matrix in an orthonormal
basis), and the fact that the sub-determinant functions of any size $n$ are weakly continuous.\end{proof}

\begin{proof} [Proof of Proposition \ref {weakProp}] Note that the two  statements are  equivalent. Thus we only need to prove (i).

\bigskip
\noindent
Note that the set of positive operators, the set of operators with norm $\leq 1$, and by the lemma, operators of rank
$\leq k$, are weakly closed sets. So we only have to show that
every positive self-adjoint operator $x$ of norm $\leq 1$ and of
rank at most $k$ can be weakly approximated by projectors of rank
$\leq k$. To see that, let us express $x$ as a diagonal matrix in an
orthonormal basis $(e_i)_{i\in \N}$, such that the diagonal
coefficients $x_{i,i}=t^2_i$ lie in $[0,1]$, and equal $0$ if $i\geq
k+1$. For every unit vector $v$, denote by $p_v$ be the projector on
$\spa\{ v\}$. For each $i\in \N$, let $s_i=\sqrt{1-t_i^2}$. Clearly,
for every $i$ the sequence of unit vectors
$v_{n}(i)=|t_i|e_i+s_ie_{ni+n(k+1)}$ converges weakly to $|t_i|e_i$.
Hence the sequence of projectors $p_{v_n(i)}$ converges weakly to
$t^2_ip_{e_i}.$ Moreover $v_n(i)\perp v_n(j)$ for $i\ne j$. Therefore the sequence $x_n=\sum_{1\leq
i\leq k}p_{v_n(i)}$ is a sequence  of projectors of rank $k$ that converges weakly to $x=\sum_{1\leq
i\leq k}t^2_ip_{e_i}$.
\end{proof}

\noindent Note that (by definition) the space $\Pi_{-k}^+$ is the set of positive self-adjoint $q$ such that there exists a projector $p$ of corank at most $k$ such that  $q \geq p$. As an immediate consequence of (2) in the above proposition, we obtain
\begin{cor}\label{finitecorankThm}
We have that $\overline{\Pi_{-k}^+}=\Pi_{-k}^+$,
so, in particular, the set $\CC$ of subspaces of dimension at most $k$ has MSAP.
\end{cor}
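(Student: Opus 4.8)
The plan is to obtain both assertions as short consequences of part~(2) of Proposition~\ref{weakProp} together with the first implication of Theorem~\ref{topologicalPropMAP}. The substantive work, namely the identification of the weak closure of $\Pi_{-k}$, is already carried out in the proposition, so what remains is essentially topological bookkeeping.

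First I would record that $\overline{\Pi_{-k}}\subseteq\Pi_{-k}^+$. By Proposition~\ref{weakProp}(2), every $q\in\overline{\Pi_{-k}}$ is a positive operator of norm $\le 1$ with $q\ge p$ for some $p\in\Pi_{-k}$; then $q-p\in\Pos$ and $q=p+(q-p)\in\Pi_{-k}+\Pos=\Pi_{-k}^+$. Adding $\Pos$ to this inclusion and using $\Pos+\Pos=\Pos$ gives $\overline{\Pi_{-k}}+\Pos\subseteq\Pi_{-k}^+$, and since the reverse inclusion $\Pi_{-k}^+\subseteq\overline{\Pi_{-k}}+\Pos$ is trivial, I obtain $\Pi_{-k}^+=\overline{\Pi_{-k}}+\Pos$. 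Next I would invoke compactness exactly as in the proof of Theorem~\ref{topologicalPropMAP}: the set $\overline{\Pi_{-k}}$ is a weakly closed subset of the norm-$\le 1$ ball of $\BB$, hence weakly compact, while $\Pos$ is weakly closed, and in a topological vector space the sum of a compact set and a closed set is closed. Thus $\Pi_{-k}^+=\overline{\Pi_{-k}}+\Pos$ is weakly closed, which is precisely the claim $\overline{\Pi_{-k}^+}=\Pi_{-k}^+$.

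Finally, for the MSAP statement I would identify the family $\CC$ of subspaces of dimension $\le k$ (via $V\mapsto Q_V=I-P_V$) with the set $\QC=\bigcup_{j\le k}\Pi_{-j}$ of projectors of corank $\le k$, and check that $\QC^+=\Pi_{-k}^+$. The inclusion $\Pi_{-k}^+\subseteq\QC^+$ is immediate from $\Pi_{-k}\subseteq\QC$. For the reverse inclusion I would use that $\HH$ is infinite dimensional: given $p\in\Pi_{-j}$ with $j\le k$, its range has codimension $j\le k$ and is infinite dimensional, so it contains a closed subspace of codimension exactly $k$ in $\HH$; the orthogonal projector $p'$ onto that subspace satisfies $p'\le p$, whence $p=p'+(p-p')\in\Pi_{-k}+\Pos=\Pi_{-k}^+$. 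Therefore $\QC\subseteq\Pi_{-k}^+$ and hence $\QC^+\subseteq\Pi_{-k}^+$, giving $\QC^+=\Pi_{-k}^+$, which is closed. The first implication of Theorem~\ref{topologicalPropMAP} then shows that $\CC$ has MSAP.

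I do not expect a genuine obstacle inside this argument. The only points requiring care are the passage from corank exactly $k$ to corank at most $k$ (which is where infinite dimensionality of the range of a low-corank projector is used) and the standard fact that a compact set plus a closed set is closed; all the real difficulty is concentrated in Proposition~\ref{weakProp}(2), which I am taking as given.
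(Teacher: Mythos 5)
Your proof is correct and is essentially the paper's own argument: the paper likewise deduces the corollary directly from Proposition~\ref{weakProp}(2), in effect writing $\Pi_{-k}^+=\overline{\Pi_{-k}}+\Pos$ and using the compact-plus-closed fact, while your careful passage between corank exactly $k$ and corank at most $k$ is exactly what the paper compresses into its remark that ``by definition'' $\Pi_{-k}^+$ is the set of positive self-adjoint $q$ dominating some projector of corank at most $k$. One small slip to fix in your last sentence: the implication you need from Theorem~\ref{topologicalPropMAP} is not the first one (its hypothesis $\QC=\overline{\QC}$ actually fails here, since $\Pi_{-k}$ is not weakly closed in infinite dimensions), but the composite of the last two, namely $(\QC^+=\overline{\QC^+})\Rightarrow(\conv(\QC^+)=\conv(\overline{\QC^+}))\Rightarrow \textnormal{MSAP}$, which is precisely what your closedness statement feeds into.
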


\subsection{The $\pi(G)$-invariant setting}

 Let $\HH$ denotes  a Hilbert space (possibly of finite dimension), and let $G$ be a locally compact, $\sigma$-compact group, and let $\pi$ be a continuous unitary representation of $G$, i.e. a continuous homomorphism $G\to \BB$. 

\begin{defn}
Let $W$ be a closed $\pi(G)$-invariant subspace of $\HH$, i.e., $\pi(g)w\in W$ for all $g\in G $ and $w\in W$.
We define the $\pi(G)$-dimension of $W$ to be the minimal dimension of a subspace $V$ such that $$W=\overline{\spa\{\pi(g)v;\; g\in G, v\in V\}}.$$
\end{defn}

\begin{ex} 
The prototypical example is $\HH=L^2(\R^d)$, $G=\Z^d$, and $\pi(k)$ being the translation operator by $k$. Given $\phi_1, \ldots, \phi_r\in L^2(\R)$, the space $W=\overline {\spa\{\phi_1(\cdot-k),\cdots,\phi_r(\cdot-k)\}}$ is a $\pi(G)$-invariant subspace of $L^2.$ Moreover, by construction, $W=\overline{\spa\{\pi(k)f;\; k\in \Z^d, f\in V\}},$ where $V=\spa\{\phi_1,\cdots,\phi_r\}$. Hence the $\pi(\Z^d)$-dimension of $W$ is $\leq r$. Such a space is often called a shift-invariant space of length $r$.
\end{ex}

Recall that an operator $A\in \BB$ is called $\pi(G)$-invariant if it commutes with the representation $\pi$, i.e. if
$\pi(g)A(w)=A(\pi(g)(w))$ for all $g\in G$ and $w\in \HH$.
Note that if $p$ is a projector, and if the representation is unitary, then $p$ is $\pi(G)$-invariant if and only if its range (resp. its kernel) is a $\pi(G)$-invariant subspace.

 In the ``$\pi(G)$-invariant setting", we can extend the notion of  corank in the obvious way, namely by saying that a $\pi(G)$-invariant operator has $\pi(G)$-corank $k$ if  its kernel has $\pi(G)$-dimension $k$.
We will denote the set of projectors of corank $\leq k$ by $\Pi_{-k}^G(\HH)$. 

\begin{rem}
Note that if the representation $\pi$ is trivial (i.e. if $\pi(g)=\Id$ for all $g\in G$), then the $\pi(G)$-dimension and $\pi(G)$-corank coincide with the usual dimension and corank.
\end{rem}

  Let us end this section by stating an analogue of Theorem \ref{topologicalPropMAP} in the $\pi(G)$-invariant setting. The proof is the same, using the fact that $\pi(G)$-invariant operators form a weak-closed subalgebra of $\BB$ (i.e. a Von Neumann subalgebra of $\BB$). Let us define $(\mathcal P^+)^G(\HH)$ to be the set of all $\pi(G)$-invariant, positive operators in $\BB.$ 

\begin{prop}
Suppose that $\CC$ is a set of $\pi(G)$-invariant subspaces of $\HH$.
If $\QC+(\mathcal P^+)^G(\HH)$ is closed in the \wst, then $\CC$ satisfies $MSAP$.
\end{prop}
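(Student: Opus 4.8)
The plan is to mimic the proof of Theorem \ref{topologicalPropMAP} verbatim, replacing the cone $\Pos$ of all positive operators by the smaller cone $(\mathcal P^+)^G(\HH)$ of $\pi(G)$-invariant positive operators, and replacing $\BB$ by its $\pi(G)$-invariant part. The one structural fact that makes this substitution legitimate is that the commutant of $\pi(G)$, call it $\mathcal{A}=\{A\in\BB:\ \pi(g)A=A\pi(g)\ \forall g\in G\}$, is a von Neumann subalgebra of $\BB$, hence weakly closed and closed under taking adjoints and positive parts. In particular $(\mathcal P^+)^G(\HH)=\mathcal{A}\cap\Pos$ is a weakly closed convex cone, and every projector in $\QC$ lies in $\mathcal{A}$ since its kernel is $\pi(G)$-invariant.

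First I would set $\alpha:=\sum_{f\in\F}\|f\|^2$ and recall that $\Phi_\F(Q)=\sum_{f\in\F}\la Qf,f\ra$ is a weak-operator-continuous linear functional that is non-negative on $\Pos$, hence a fortiori on $(\mathcal P^+)^G(\HH)$. Given a finite set $\F$, the functional $\Phi_\F$ must attain its infimum over $\QC^{+,G}:=\QC+(\mathcal P^+)^G(\HH)$: because $\QC\subset\mathcal{A}$ is contained in the unit ball and $\mathcal{A}$ is weakly closed, the weak closure $\overline{\QC}$ is a weakly compact subset of $\mathcal{A}$, so $\Phi_\F$ attains its minimum at some $R_0\in\overline{\QC}$. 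Now I invoke the hypothesis that $\QC^{+,G}$ is weakly closed. Since $(\mathcal P^+)^G(\HH)$ is weakly closed and $\overline{\QC}$ is weakly compact, one has $\overline{\QC^{+,G}}=\overline{\QC}+(\mathcal P^+)^G(\HH)$, and closedness of $\QC^{+,G}$ forces $\overline{\QC}\subset\QC^{+,G}$.

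Consequently $R_0=Q_0+N$ with $Q_0\in\QC$ and $N\in(\mathcal P^+)^G(\HH)$. Since $\Phi_\F(N)\ge 0$, we get the chain
\begin{equation*}
\Phi_\F(Q_0)\le \Phi_\F(Q_0)+\Phi_\F(N)=\Phi_\F(R_0)\le\Phi_\F(Q_0),
\end{equation*}
using that $R_0$ minimizes $\Phi_\F$ over $\overline{\QC}\supset\QC$ and hence $\Phi_\F(R_0)\le\Phi_\F(Q_0)$. Therefore $\Phi_\F(Q_0)=\inf_{Q\in\QC}\Phi_\F(Q)$ with $Q_0\in\QC$, which is exactly MSAP for the set $\F$. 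As $\F$ was an arbitrary finite subset of $\HH$, the set $\CC$ satisfies MSAP. (Here I am using the reformulation from Section \ref{RP} that $e(\F,V)=\Phi_\F(Q_V)$, so that a minimizing projector in $\QC$ corresponds to a minimizing subspace in $\CC$.)

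The step I expect to require the most care is the identity $\overline{\QC^{+,G}}=\overline{\QC}+(\mathcal P^+)^G(\HH)$ together with the deduction $\overline{\QC}\subset\QC^{+,G}$: this is the exact analogue of the opening line of the proof of Theorem \ref{topologicalPropMAP}, but it genuinely needs the weak-operator closedness of the cone $(\mathcal P^+)^G(\HH)$, which is precisely what the von Neumann algebra structure of the commutant supplies. Everything else is formal and carries over from the original argument once one observes that all operators in play live inside the weakly closed algebra $\mathcal{A}$, so no element outside the $\pi(G)$-invariant world is ever produced. The remark in the excerpt that ``the proof is the same'' is thus accurate, and the only genuine input beyond Theorem \ref{topologicalPropMAP} is the closedness of the invariant positive cone.
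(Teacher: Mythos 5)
Your proof is correct and is exactly what the paper intends: the paper's own ``proof'' is the one-line remark that the argument of Theorem \ref{topologicalPropMAP} carries over verbatim once one knows that the $\pi(G)$-invariant operators form a weakly closed (von Neumann) subalgebra of $\BB$, so that $(\mathcal P^+)^G(\HH)$ is a weakly closed cone on which $\Phi_\F$ is non-negative, and that is precisely the argument you give, with the convenient streamlining that your hypothesis yields $Q_0\in\QC$ directly rather than in $\conv(\QC)$. One minor remark: the identity $\overline{\QC+(\mathcal P^+)^G(\HH)}=\overline{\QC}+(\mathcal P^+)^G(\HH)$ is a harmless detour, since the inclusion $\overline{\QC}\subset\QC+(\mathcal P^+)^G(\HH)$ that you actually need follows at once from $\QC\subset\QC+(\mathcal P^+)^G(\HH)$ together with the assumed weak closedness of the latter set.
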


\subsection{MSAP for invariant subspaces}

 The problem that we address now is whether we can generalize Corollary \ref{finitecorankThm} to the ``$\pi(G)$-invariant'' context. Precisely: does the set of all closed $\pi(G)$-invariant subspaces with $\pi(G)$-dimension at most $k$ has MSAP for all $k\in \N$?  
For simplicity, we will restrict our attention to the case of abelian groups. 
 
 As  mentioned earlier, an interesting case is the shift-invariant  spaces case in which  $G=\Z^d$ acts by translations on $\HH=L^2(\R^d)$. For this case the $\pi(G)$-invariant subspaces are usually called shift-invariant subspaces. Moreover, the $\pi(G)$-dimension is called {\it length} in this particular example. We now prove that the fact  that shift-invariant subspaces of length $\leq l$, for any $l$ satisfy $MSAP$  \cite{ACHM07} actually holds for any unitary representation of a locally compact abelian group $G$ (note that even for the group $\Z$, this yields a lot of new examples).

\begin{thm}\label{GinvThm}
Let $\pi$ be a unitary representation of a locally compact
$\sigma$-compact abelian group $G$.  Then, $$\overline{\Pi_{-k}^G}\subset \Pi_{-k}^G+(\mathcal P ^+)^G(\HH).$$ In particular, the set of closed $\pi(G)$-invariant subspaces of $\HH$ of $\pi(G)$-dimension at most $k$ has MSAP.
\end{thm}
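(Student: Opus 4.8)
The plan is to deduce the MSAP statement from the displayed inclusion and to concentrate all the work on the inclusion itself. Granting $\overline{\Pi_{-k}^G}\subset \Pi_{-k}^G+(\mathcal P^+)^G(\HH)$, one first gets that $\Pi_{-k}^G+(\mathcal P^+)^G(\HH)$ is weakly closed: indeed $\overline{\Pi_{-k}^G}$ is weakly compact (a weakly closed subset of the unit ball), $(\mathcal P^+)^G(\HH)$ is weakly closed, and the sum of a compact set and a closed set is closed in a topological vector space. Since the inclusion forces $\overline{\Pi_{-k}^G}+(\mathcal P^+)^G(\HH)=\Pi_{-k}^G+(\mathcal P^+)^G(\HH)$ (the reverse containment being clear, and the forward one following from $\overline{\Pi_{-k}^G}\subset \Pi_{-k}^G+(\mathcal P^+)^G$ together with closedness of the cone under addition), the latter set is closed. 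The MSAP of the family of $\pi(G)$-invariant subspaces of $\pi(G)$-dimension at most $k$ then follows at once from the preceding Proposition, the $\pi(G)$-invariant analogue of Theorem \ref{topologicalPropMAP}. So the entire content is the inclusion.

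For the inclusion the first observations are soft. Since the $\pi(G)$-invariant operators form a weakly closed von Neumann algebra $\mathcal{M}=\pi(G)'$, any $q\in\overline{\Pi_{-k}^G}$ is again $\pi(G)$-invariant; being a weak limit of projectors, it is positive with $\|q\|\le 1$. The natural candidate for the dominated projector is $p:=E_{\{1\}}(q)$, the spectral projection of $q$ onto the eigenvalue $1$. As $p$ lies in the abelian von Neumann algebra generated by $q$, it is again $\pi(G)$-invariant, and by construction $p\le q$, so $q=p+N$ with $N=q-p\in(\mathcal P^+)^G(\HH)$. Thus the inclusion reduces to the single assertion that $p\in\Pi_{-k}^G$, i.e. that $\ker p$ has $\pi(G)$-dimension at most $k$.

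To analyse $\ker p$ I would diagonalise the representation. Because $G$ is locally compact $\sigma$-compact abelian, the Stone--Naimark--Ambrose--Godement theorem provides a projection-valued measure on $\hat G$ and a direct integral decomposition $\HH=\int_{\hat G}^{\oplus}\HH_\chi\, d\mu(\chi)$ diagonalising $\pi(G)''$. In this picture $\pi(G)$-invariant operators are exactly the decomposable ones; a $\pi(G)$-invariant projector of $\pi(G)$-corank $\le k$ corresponds to a measurable field $\chi\mapsto Q_\chi$ of projectors with $\dim\ker Q_\chi\le k$ for a.e. $\chi$ (the range-function description, as in the shift-invariant example); and $\ker p$ has $\pi(G)$-dimension at most $k$ precisely when $\rank(\mathrm{Id}_\chi-q_\chi)\le k$ for a.e. $\chi$. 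The goal is therefore the fibered analogue of the Lemma preceding Proposition \ref{weakProp}: the condition ``fiber rank $\le k$'' should be preserved in the weak limit $q$ of the projectors $Q^{(n)}\in\Pi_{-k}^G$.

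This last step is where I expect the real difficulty, and where the abelian hypothesis must be used essentially. In the non-invariant setting the Lemma is proved by noting that $\rank\le k$ is cut out by the vanishing of the weakly continuous $(k+1)\times(k+1)$ sub-determinants; but weak operator convergence of decomposable operators does \emph{not} descend to almost-everywhere weak convergence of the fibers, and the sub-determinant functions are nonlinear, so the argument does not transfer verbatim. The plan is instead to encode ``$\pi(G)$-corank $\le k$'' as the vanishing, in the center $\pi(G)''\cong L^\infty(\hat G,\mu)$, of a center-valued quantity attached to $\mathrm{Id}-q$ (morally the $(k+1)$-st exterior power, or the family of compressions of $\mathrm{Id}-q$ to finite measurable subfields), and then to show that this center-valued rank cannot increase in a weak limit of decomposable operators whose fibers already have rank $\le k$. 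Establishing this semicontinuity of the center-valued dimension for the weak topology, using the commutativity of $\pi(G)''$ to reduce the nonlinear fiber data to a scalar measure-theoretic statement on $\hat G$, is the crux of the whole proof; once it is in place, $p\in\Pi_{-k}^G$ and the inclusion follows.
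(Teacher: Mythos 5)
Your reduction is sound as far as it goes, and it mirrors the paper's architecture: deducing MSAP from the displayed inclusion (compact plus closed is closed, then the $\pi(G)$-invariant analogue of Theorem \ref{topologicalPropMAP}) is exactly the intended route, and your candidate $p=E_{\{1\}}(q)$ is legitimate --- indeed canonical, since by Proposition \ref{weakProp}(2) any projector dominated by a positive norm-one contraction $q$ is dominated by the eigenprojection of $q$ at $1$. The direct-integral (SNAG) setting is also the paper's. But everything you actually prove stops there: the assertion that $\ker p$ has $\pi(G)$-dimension at most $k$, which you yourself call ``the crux of the whole proof,'' is never established. What you offer for it is a plan (encode $\pi(G)$-corank by a center-valued rank or $(k+1)$-st exterior power and prove its semicontinuity under weak limits), and a plan for the one step that carries all the content of Theorem \ref{GinvThm} is a gap, not a proof.

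The paper fills exactly this hole, and by a mechanism that contradicts one of your working assumptions. You assert that weak operator convergence of decomposable operators ``does not descend to almost-everywhere weak convergence of the fibers''; part (iii) of the paper's generalized spectral theorem asserts and proves the opposite for uniformly bounded measurable fields: if $A^n=\int^{\oplus}A^n_{\chi}\,d\mu\to B$ weakly, then $B$ decomposes and, after passing to a subsequence via convergence in measure, $A^{n_j}_{\chi}\to B_{\chi}$ weakly a.e., whence $\overline{\W}=\int^{\oplus}\overline{\W_{\chi}}\,d\mu$ for $\W=\int^{\oplus}\W_{\chi}\,d\mu$. This linearizes the problem fiber by fiber: each fiber $q_{\chi}$ of $q\in\overline{\Pi_{-k}^G}$ lies in $\overline{\Pi_{-k}(\HH_{\chi})}$, so Proposition \ref{weakProp}(2) (a single-Hilbert-space statement, where the sub-determinant argument is available) gives $q_{\chi}\geq p_{\chi}$ with $\corank(p_{\chi})\leq k$; no center-valued rank is needed because the nonlinear rank argument is only ever applied inside one fiber. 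The second half of the crux, which your sketch also bypasses, is measurability: one must choose the field $(p_{\chi})$ measurably and show that a decomposable projector with fiber coranks $\leq k$ a.e. really has $\pi(G)$-corank $\leq k$. You invoke this equivalence as ``the range-function description'' as if it were off the shelf, but in this generality it is precisely Lemma \ref{invariantprojClaim} of the paper, whose nontrivial direction requires a measurable selection of kernel bases (cover $\Pi_{-k}$ by small balls, transport a fixed orthonormal kernel basis by $1-p_z$, and weight by a positive $L^2$ function to manufacture $k$ generators of $\ker p$). So both ingredients of the decisive step --- fiber-wise semicontinuity and measurable reassembly --- are missing from your argument, and the semicontinuity you propose to prove is, in the paper's treatment, a consequence of a.e.\ fiber convergence rather than a substitute for it.
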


Our main tool for proving this theorem, will be the generalized spectral theorem for locally compact $\sigma$-compact abelian groups (see for instance \cite{Dix}). 
Let $\hat{G}$ be the dual of $G$, i.e. the set of complex characters  ${\chi}$ on $G$. Recall that a character is a continuous homomorphism from $G$ to the group of complex numbers of modulus $1$. As this group identifies as $U(1)$, i.e. the group of unitary operators in dimension $1$, one sees that  $\hat{G}$ can be interpreted as the set of one-dimensional representations of $G$.
 Note that as $U(1)$ is a compact group, $\hat{G}$ comes naturally equipped with the structure of a locally compact, $\sigma$-compact group. To keep in mind some concrete examples, let us recall that the dual of $\Z$ is $U(1)$, that the dual of $U(1)$ is $\Z$ and that $\hat{\R}=\R$.

\subsection{Proof in finite dimension}

Although the proof in finite dimension is not different in spirit from the general case, we decided to add it for pedagogical reasons, since this allows us to avoid all the problems concerning measurability.

Let us start by recalling the content of the spectral theorem in finite dimension. Assume  $dim(\HH)=d$. Then there exists an orthonormal basis $(e_1,\ldots, e_d)$ where all operators $\pi(g)$ are diagonal in this basis. In other words there exists an isometry $T:\HH\to \ell^2(\{1,\ldots , d \})$ such that $T\pi(g)T^{-1}=\diag(\chi_i(g))$, where for each $1\leq i\leq d$, $\chi_i$ is a character of $G$.
Although this description of $\pi$ is quite nice, it could be improved to take into account the fact that various $\chi_i$'s might be equal. We now give another description.

\bigskip

\noindent{\bf The spectral theorem in finite dimension.}
Now, for each $\chi\in \hat{G}$, let us define the multiplicity $m_{\chi}$ of $\chi$ in $\pi$ to be the number of distinct values of $i$ such that $\chi=\chi_i$.
Let $\mu$ be the measure on $\hat{G}$ which equals a dirac on $\chi$ if  $m_{\chi}>0$, and zero otherwise. For each $\chi$, associate the space $\HH_{\chi}=\C^{m_{\chi}}$ (note that for all except finitely many this space is reduced to zero).
Now, define the space $L^2(\hat{G},(\HH_{\chi}),\mu)$ to be the direct sum $\bigoplus_{\chi} \HH_{\chi}$.  An element of this space is denoted $v=(v(\chi))_{\chi}$ such that $v(\chi)\in \HH_{\chi}$ for each $\chi\in \hat{G}.$ We equip  $L^2(\hat{G},(\HH_{\chi}),\mu)$ with the following scalar product:  for $v,w\in L^2(\hat{G},(\HH_{\chi}),\mu)$, we have
$$\langle v,w\rangle:=\int_{\hat{G}}\langle v(\chi)w(\chi)\rangle_{\HH_{\chi}} d\mu(\chi)=\sum_{\chi}\langle v(\chi),w(\chi)\rangle_{\HH_{\chi}}.$$
Let us now collect a few  observations. 

\begin{prop}

\

\begin{enumerate}
\item There is an isometry $S:\HH\to L^2(\hat{G},(\HH_{\chi}),\mu)$ such that  for all $g\in G$,  and all $v\in L^2(\hat{G},(\HH_{\chi}),\mu)$,
$$S\pi(g)S^{-1}(v)(\chi)=\chi(g)v(\chi).$$ To simplify the notation,  let us identify $\HH$ with $L^2(\hat{G},(\HH_{\chi}),\mu)$ and $\pi$ with $S\pi S^{-1}$, so that we now simply have
$$\pi(g)(v)(\chi)=\chi(g)v(\chi)$$
\item $A\in \BB$ is $\pi(G)$-invariant if and only if $A$ decomposes as $\bigoplus_{\chi\in \hat{G}}A_{\chi}$, with $A_{\chi}\in B(\HH_{\chi})$.
\item Suppose that $V$ is a subspace of $\HH$, and let $W$ be the $\pi(G)$-invariant subspace generated by $V$, i.e.  $W=\spa\{\pi(g)v; \; g\in G, v\in V\}.$ Then $W=\bigoplus_{\chi}V_{\chi}$ where each $V_{\chi}$ is the  projection of $V$  onto $\HH_{\chi}$.
\end{enumerate}
\end{prop}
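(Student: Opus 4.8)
The plan is to build everything on top of the simultaneous diagonalization already recorded just before the proposition: there is an orthonormal basis $(e_1,\dots,e_d)$ of $\HH$ with $\pi(g)e_i=\chi_i(g)e_i$ for characters $\chi_1,\dots,\chi_d\in\hat{G}$. First I would group the basis by its character: for each $\chi\in\hat{G}$ set $I_\chi=\{i:\chi_i=\chi\}$, so that $|I_\chi|=m_\chi$, and let $\HH_\chi=\spa\{e_i:i\in I_\chi\}$. These subspaces are mutually orthogonal and span $\HH$, so $\HH=\bigoplus_\chi\HH_\chi$, which is by definition the space $L^2(\hat{G},(\HH_{\chi}),\mu)$. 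Taking $S$ to be this identification (sending $e_i$ to the corresponding coordinate of the $\HH_\chi$-summand) gives an isometry, and since $\pi(g)$ acts on $\HH_\chi$ as the scalar $\chi(g)$ by construction, the formula $S\pi(g)S^{-1}(v)(\chi)=\chi(g)v(\chi)$ of item (1) is immediate.

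For item (2), the easy direction is a one-line block computation: if $A=\bigoplus_\chi A_\chi$ then on each summand $A_\chi$ commutes with the scalar operator $\chi(g)\Id$, hence $A$ commutes with every $\pi(g)$. For the converse I would use that each $\HH_\chi$ is exactly the joint eigenspace $\{v:\pi(g)v=\chi(g)v\text{ for all }g\}$ (one checks this from the diagonalization, using that distinct characters are separated by some $g$); then if $A$ is $\pi(G)$-invariant and $v\in\HH_\chi$, we have $\pi(g)(Av)=A(\pi(g)v)=\chi(g)(Av)$, so $Av\in\HH_\chi$. Thus $A$ preserves every $\HH_\chi$ and restricts to $A_\chi\in B(\HH_\chi)$, giving the decomposition.

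The substantive step is item (3), and I expect the main obstacle to be the inclusion $V_\chi\subseteq W$; the reverse inclusion $W\subseteq\bigoplus_\chi V_\chi$ is immediate, since each generator $\pi(g)v=\sum_\chi\chi(g)P_\chi v$ already lies in $\bigoplus_\chi V_\chi$, where $P_\chi$ denotes the orthogonal projection onto $\HH_\chi$. To recover each component, I would fix $v\in V$ and examine $\spa\{\pi(g)v:g\in G\}$, which consists of the vectors $\sum_\chi\chi(g)P_\chi v$. Letting $\chi_1,\dots,\chi_n$ be the distinct characters with $P_{\chi_j}v\neq 0$, the crux is the \emph{linear independence of distinct characters}: since no nontrivial relation $\sum_j c_j\chi_j(g)=0$ holds for all $g$, the evaluation vectors $(\chi_1(g),\dots,\chi_n(g))$ span $\C^n$ as $g$ ranges over $G$. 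Because $P_{\chi_1}v,\dots,P_{\chi_n}v$ are linearly independent (they lie in distinct orthogonal summands and are nonzero), this forces $\spa\{\pi(g)v:g\in G\}=\bigoplus_j\C\,P_{\chi_j}v$, so in particular every component $P_\chi v$ lies in $W$. Spanning over $v\in V$ then gives $P_\chi V=V_\chi\subseteq W$ for each $\chi$, hence $\bigoplus_\chi V_\chi\subseteq W$, and combined with the reverse inclusion this yields $W=\bigoplus_\chi V_\chi$. The one point requiring care is the passage from ``characters are independent as functions on $G$'' to ``their evaluation vectors span $\C^n$,'' which is just the dual reformulation via a separating hyperplane.
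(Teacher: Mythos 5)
Your proof is correct, and its skeleton is the same as the paper's: the paper declares (1) and (2) elementary and proves (3) by the same two inclusions, namely $W\subseteq\bigoplus_\chi V_\chi$ because the right-hand side is $\pi(G)$-invariant and contains $V$, and then $V_\chi\subseteq W$. The difference lies in how the crucial second inclusion is justified. The paper simply asserts it (``since $V\subset W$, $V_\chi\subset W$''), implicitly relying on the fact that a $\pi(G)$-invariant subspace $W$ is itself block-decomposed, $W=\bigoplus_\chi(W\cap\HH_\chi)$, which one gets by applying item (2) to the orthogonal projector onto $W$ (a $\pi(G)$-invariant operator, since the representation is unitary and $W$ is invariant). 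You instead prove the inclusion from scratch via Dedekind's linear independence of distinct characters: the evaluation vectors $(\chi_1(g),\dots,\chi_n(g))$ span $\C^n$, hence each component $P_\chi v$ lies in $\spa\{\pi(g)v: g\in G\}\subseteq W$. Both routes are valid; yours is more self-contained (it shows, in effect, that each projector $P_\chi$ lies in the linear span of the operators $\pi(g)$, which is the finite-dimensional germ of the averaging/spectral argument), while the paper's implicit route is shorter given that item (2) is already available. One small simplification: for the inclusion $V_\chi\subseteq W$ you do not actually need the linear independence of the vectors $P_{\chi_j}v$; the spanning of the evaluation vectors alone already yields $P_{\chi_j}v\in\spa\{\pi(g)v: g\in G\}$, and the independence is only needed if you want the finer statement that $\spa\{\pi(g)v\}$ equals the direct sum $\bigoplus_j\C\,P_{\chi_j}v$.
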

\begin{proof} The proofs of (1) and (2) are elementary. Let us only indicate how to prove (3). Note that since $\spa\{\pi(g)v; \; g\in G, v\in V\}$ and that $\bigoplus_{\chi}V_{\chi}$ is $\pi(G)$-invariant we have that $W\subset \bigoplus_{\chi}V_{\chi}$. Moreover, since $V\subset W$, $V_\chi\subset W$, hence $\bigoplus_{\chi}V_{\chi}\subset W$. Thus, $W=\bigoplus_{\chi}V_{\chi}$. 
\end{proof}

\bigskip

Let us go back to the purpose of this section, namely the proof of Theorem \ref{GinvThm} when $\HH$ is finite-dimensional.
By the above proposition,  elements in $\Pi_{-k}^G$ are exactly elements of the form $p=\bigoplus_{\chi} p_{\chi}$, where each $p_{\chi}$ is  a projector on $\HH_{\chi}$ of corank at most $k$, i.e. an element of $\Pi_{-k}(\HH_{\chi})$.  Since $\HH$ is finite, there are only finitely $\chi$ for which $p_\chi$ is the nonzero projector. This latest fact together with the fact that for each $\chi$,  the set $\Pi_{-k}(\HH_{\chi})$ is closed, implies that $\Pi_{-k}^G$ is closed, and  Theorem \ref{GinvThm} follows when $\HH$ is finite-dimensional.

\subsection{Proof of Theorem \ref{GinvThm}}

As previously mentioned, the proof of  Theorem \ref{GinvThm} relies on the generalized spectral theorem for abelian groups that we now recall. The first two assertions of the following theorem follow from the classical theory (see for instance [Dix]).

\begin{thm}[Generalized spectral theorem for abelian groups]
Let $G$ be a locally compact $\sigma$-compact abelian group, and let $\pi$ be a unitary representation of $G$.
There
exists a Borel measure $\mu$ on $\hat{G}$, together with an isometry
from $\HH$ to $L^2(\hat{G},(\HH_{\chi}),\mu)$, where $(\HH_{\chi})_{\chi\in \hat{G}}$ is a measurable
family of  subspaces of a separable Hilbert space $\HH'$, such that the representation $\pi$
decomposes as a direct integral of representations
$(\HH_{\chi},\pi_{\chi})$, where $\pi_{\chi}$ is the multiplication by the character $\chi$. This is usually denoted by
$$\pi=\int^{\oplus}_{\hat{G}}\pi_{\chi}d\mu(\chi). $$
Moreover, the following statement are true.
\begin{itemize}
\item[(i)]  $A\in \BB$ is $\pi(G)$-invariant if and only if $A$ decomposes as $\int^{\oplus}_{\chi\in \hat{G}}A_{\chi}d\mu(\chi)$, with $A_{\chi}\in B(\HH_{\chi})$ (defined a.e.).
\item[(ii)]  Suppose that $V$ is a closed subspace of $\HH$, and let $W$ be the closed $\pi(G)$-invariant subspace generated by $V$, i.e.  $W=\overline{\spa\{\pi(g)v; \; g\in G, v\in V\}}.$ Then $W=\int^{\oplus}_{\chi}V_{\chi}d\mu(\chi)$ where each $V_{\chi}$ is the  projection of $V$  onto $\HH_{\chi}$ (defined a.e.).
\item[(iii)]  For every $\chi\in \hat{G}$, let $\W_{\chi}$ be a subset of operators of $B(\HH_{\chi})$ of norm $\leq 1$. Suppose moreover that the family $(\W_{\chi})_{\chi\in \hat{G}}$ is measurable. Consider $\W$ the subset of all ($\pi(G)$-invariant) operators of the form $\int^{\oplus}_{\hat{G}}A_{\chi}d\mu(\chi)$ such that $A_{\chi} \in \W_{\chi}$, which we denote by
$$\W:=\int^{\oplus}_{\hat{G}}\W_{\chi}d\mu(\chi).$$
Then, we have
$$\overline{\W}=\int^{\oplus}_{\hat{G}}\overline{\W_{\chi}}d\mu(\chi),$$
for the weak topology.
\end{itemize}
\end{thm}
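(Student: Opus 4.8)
The direct integral decomposition of $\pi$, together with assertions (i) and (ii), is classical (see \cite{Dix}), so I would concentrate entirely on (iii). Throughout, identify $\HH$ with $L^2(\hat G,(\HH_\chi),\mu)$; since the ambient fibre space $\HH'$ is separable, the unit ball of $B(\HH')$ is metrizable and compact in the weak operator topology, and I fix a compatible metric $\delta$ on it. Recall also the structural fact that the decomposable operators are exactly the commutant of the algebra of diagonal (multiplication) operators, hence form a von Neumann algebra; in particular the decomposable operators of norm $\leq 1$ form a weakly closed set. This is what licences the assertion that any weak limit of elements of $\W$ is again decomposable, i.e. of the form $A=\int^{\oplus}A_\chi\,d\mu$.

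\textbf{The inclusion $\int^{\oplus}\overline{\W_\chi}\,d\mu\subseteq\overline{\W}$.} This is the constructive direction and I expect it to go through cleanly. Fix $A=\int^{\oplus}A_\chi\,d\mu$ with $A_\chi\in\overline{\W_\chi}$ for a.e. $\chi$. For each $n$, consider the fibrewise set $\{(\chi,B):B\in\W_\chi,\ \delta(B,A_\chi)\leq 1/n\}$; measurability of the family $(\W_\chi)$ together with measurability of $\chi\mapsto A_\chi$ makes this a measurable multifunction with non-empty sections a.e., so the Kuratowski--Ryll-Nardzewski selection theorem yields a measurable field $\chi\mapsto B^{(n)}_\chi\in\W_\chi$ with $\delta(B^{(n)}_\chi,A_\chi)\leq 1/n$ a.e. Put $A^{(n)}=\int^{\oplus}B^{(n)}_\chi\,d\mu\in\W$. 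For fixed $v,w\in\HH$,
\[
\langle A^{(n)}v,w\rangle-\langle Av,w\rangle=\int_{\hat G}\langle (B^{(n)}_\chi-A_\chi)v(\chi),w(\chi)\rangle\,d\mu(\chi),
\]
and since $\delta(B^{(n)}_\chi,A_\chi)\to 0$ a.e.\ (hence $B^{(n)}_\chi\to A_\chi$ fibrewise in the weak operator topology) the integrand tends to $0$ a.e.; it is dominated by $2\|v(\chi)\|\,\|w(\chi)\|\in L^1(\mu)$, so dominated convergence gives $A^{(n)}\to A$ weakly, whence $A\in\overline{\W}$.

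\textbf{The inclusion $\overline{\W}\subseteq\int^{\oplus}\overline{\W_\chi}\,d\mu$.} Here I would take $A^{(n)}=\int^{\oplus}A^{(n)}_\chi\,d\mu\in\W$ with $A^{(n)}\to A$ weakly (a sequence suffices by metrizability on the unit ball). As noted above $A=\int^{\oplus}A_\chi\,d\mu$ is automatically decomposable, so everything reduces to showing $A_\chi\in\overline{\W_\chi}$ for a.e. $\chi$. The natural strategy is to prove directly that $\int^{\oplus}\overline{\W_\chi}\,d\mu$ is weakly closed (it plainly contains $\W$), by encoding ``$A_\chi\in\overline{\W_\chi}$ a.e.'' through a countable weakly determining family of vectors and exhibiting it as a measurable constraint stable under the relevant limits.

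\textbf{Main obstacle.} The crux is exactly this reverse inclusion. The difficulty is that global weak convergence $\int^{\oplus}A^{(n)}_\chi\,d\mu\to\int^{\oplus}A_\chi\,d\mu$ does \emph{not} transmit to a.e.\ fibrewise weak convergence $A^{(n)}_\chi\to A_\chi$, so one cannot simply read off $A_\chi\in\overline{\W_\chi}$ on each fibre; reconstructing the fibres of a weak limit from the global data is the delicate point. This is where the measurable structure of $(\W_\chi)$ must be used in an essential way, and where — in the absence of fibrewise convexity of the $\W_\chi$ — one must argue with particular care (for scalar fibres the weak closure of a direct integral already tends to convexify the fibres, which signals that this is precisely the step carrying any hidden hypotheses). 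I expect the bulk of the work, and the only real subtlety of the theorem, to reside in making this fibrewise reconstruction rigorous.
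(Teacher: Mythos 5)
Your first inclusion, $\int^{\oplus}_{\hat G}\overline{\W_\chi}\,d\mu\subseteq\overline{\W}$, is correct, and it is in substance the same argument the paper uses for that direction (the paper's ``conversely'' step, ending with dominated convergence); you are in fact more careful than the paper, which simply posits a single sequence $A^n_\chi\in\W_\chi$ converging to $B_\chi$ simultaneously for a.e.\ $\chi$, glossing over exactly the measurable-selection issue your Kuratowski--Ryll-Nardzewski argument addresses (one small caveat: KRN wants closed values, and $\W_\chi$ need not be closed, so one should rather select from the graph $\{(\chi,B):B\in\W_\chi\}$ by a von Neumann-type uniformization; this is minor, given that the paper never defines ``measurable family'' precisely). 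The genuine gap is that your proposal stops there: the inclusion $\overline{\W}\subseteq\int^{\oplus}_{\hat G}\overline{\W_\chi}\,d\mu$ is only announced as the crux, with a strategy sketched but not executed, so as written you have proved half of (iii).

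Your diagnosis of that half is, however, exactly right, and comparing with the paper shows the gap cannot be closed in the stated generality. The paper's proof of this direction deduces, from $\int_{\hat G}\langle (A^n_\chi-B_\chi)v(\chi),w(\chi)\rangle_{\HH_\chi}\,d\mu(\chi)\to 0$ for all $v,w$, that $\|(A^n_\chi-B_\chi)v(\chi)\|_{\HH_\chi}$ converges to zero in measure; this implication is false. Take $G=\Z$ acting on $\HH=L^2(U(1))$ by $\pi(n)=$ multiplication by $e^{in\theta}$, so that $\mu$ is Haar measure, $\HH_\chi=\C$, and the invariant operators are the $L^\infty$ multiplication operators. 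Let $\W_\chi=\{z\in\C:\,|z|=1\}$, a constant (hence measurable) and fibrewise closed family, and let $A^n:=\pi(n)\in\W$. By Riemann--Lebesgue, $A^n\to 0$ weakly, yet $\|A^n_\chi v(\chi)\|=|v(\chi)|$ for every $n$, so nothing converges in measure. Worse, this refutes (iii) itself: $0\in\overline{\W}$ while $\int^{\oplus}\overline{\W_\chi}\,d\mu=\W$ contains only unimodular multiplications --- precisely the ``convexification of scalar fibres'' you flagged as the locus of hidden hypotheses. Nor is the phenomenon confined to exotic fibre sets: with fibres $\HH_\chi=\C^2$ (multiplicity-two representation of $\Z$), the fields of rank-one projectors $P_n(\chi)=$ projection onto $\spa\{(\cos 2\pi n\chi,\sin 2\pi n\chi)\}$ lie in $\int^{\oplus}\Pi_{-1}(\HH_\chi)\,d\mu$ but converge weakly to $\frac12\Id$, which dominates no nonzero projector; so the same convexification affects the sets $\W_\chi=\Pi_{-k}(\HH_\chi)$ used to derive Theorem \ref{GinvThm}. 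In short, the direction you left open is not a technical subtlety awaiting a cleverer measurability argument: it fails as stated, and any correct completion must either impose additional structure on the $\W_\chi$ (some fibrewise convexity or order-stability) or weaken the asserted equality of closures.
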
\label{SpectralThm}

\noindent {\it Proof of (iii)}. Assume $B\in \overline{\W}$, then there exists $A^n\in \W$ such that $A^n$ converges weakly to $B$. Since $B$ is also $\pi(G)$-invariant, $B=\int^{\oplus}_{\chi\in \hat{G}}B_{\chi}d\mu(\chi).$ Thus we have $\int^{\oplus}_{\chi\in \hat{G}}\big\langle (A^n_{\chi}-B_{\chi})v(\chi),w(\chi) \big\rangle_{\HH_{\chi}}d\mu(\chi) \to 0,$ as $n\to \infty.$ Hence $\|(A^n_{\chi}-B_{\chi})v(\chi) \|_{\HH_{\chi}}$ converges in measure to zero, from which we deduce that there exists a subsequence $(A_{\chi}^{n_j}-B_{\chi})v(\chi)$ that converges strongly to  $0$ a.e.. Hence $A_{\chi}^{n_j}$ converges weakly to $B_{\chi}$, a.e.. Thus, $B_{\chi}\in \overline{\W_\chi}$.  Conversely, assume that a sequence $A^n_{\chi} \in \W_{\chi}$ converges weakly to $B_{\chi}.$ Then the Lebesgue dominated convergence theorem implies that $A^n=\int^{\oplus}_{\chi\in \hat{G}}A_{\chi}d\mu(\chi),$ converges weakly to $B=\int^{\oplus}_{\chi\in \hat{G}}B_{\chi}d\mu(\chi).$     $\square$

\bigskip

The main step toward the proof of Theorem \ref{GinvThm} is the following description of $\pi(G)$-invariant projectors.

\begin{lem}\label{invariantprojClaim}
With the notation of Theorem \ref{GinvThm}, we have that for every $\pi(G)$-invariant projector $p$, there exists a measurable field $(p_{\chi})$ of projectors such that
$$p=\int^{\oplus}_{\hat{G}}p_{\chi}d\mu(\chi).$$
Moreover, the $\pi(G)$-corank of $p$ is at most $k$ (i.e., kernel of $p$ is $\pi(G)$-dimension $\leq k$) if and only if for a.e. $\chi\in \hat{G}$,
$corank (p_{\chi})\leq k.$
\end{lem}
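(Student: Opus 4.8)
The plan is to prove the two assertions separately: first extract the field $(p_\chi)$ from part (i) of the spectral theorem, and then analyse $\ker p$ through part (ii), treating the two implications of the corank statement independently.

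First I would produce the field $(p_\chi)$. Since $p$ is $\pi(G)$-invariant, part (i) of the spectral theorem gives a decomposition $p=\int^{\oplus}_{\hat{G}}p_\chi\,d\mu(\chi)$ with $p_\chi\in B(\HH_\chi)$ defined a.e. The key observation is that the algebraic identities defining an orthogonal projector pass to the fibers: the adjoint of a decomposable operator is the direct integral of the fiberwise adjoints, and the product of two decomposable operators is the direct integral of the fiberwise products. Hence $p=p^*$ forces $p_\chi=p_\chi^*$ a.e., and $p^2=p$ forces $p_\chi^2=p_\chi$ a.e., so $p_\chi$ is an orthogonal projector for a.e. $\chi$. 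Writing $q:=\Id-p=\int^{\oplus}q_\chi\,d\mu(\chi)$ with $q_\chi=\Id_{\HH_\chi}-p_\chi$, each $q_\chi$ is the projector onto $\ker p_\chi$, and $\ker p=\mathrm{range}\,q=\int^{\oplus}\ker p_\chi\,d\mu(\chi)$; thus $\corank(p_\chi)=\rank(q_\chi)$.

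For the forward direction of the corank statement, suppose the $\pi(G)$-dimension of $\ker p$ is at most $k$, so that $\ker p$ is generated, as a $\pi(G)$-invariant subspace, by some $V$ with $\dim V\le k$. By part (ii) of the spectral theorem the generated subspace is $\int^{\oplus}V_\chi\,d\mu(\chi)$, where $V_\chi$ is the projection of $V$ onto $\HH_\chi$. Comparing with $\ker p=\int^{\oplus}\ker p_\chi\,d\mu(\chi)$ and using that a decomposable projector determines its fibers a.e. (both $\int^\oplus q_\chi$ and the projector onto $\int^\oplus V_\chi$ project onto $\ker p$) gives $\ker p_\chi=V_\chi$ a.e. Since $\dim V_\chi\le\dim V\le k$, we conclude $\corank(p_\chi)\le k$ a.e.

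The converse is the substantive direction, and the main obstacle is a measurable selection combined with an integrability issue. Assume $\rank(q_\chi)=d(\chi)\le k$ a.e. Starting from a fundamental sequence of measurable vector fields $(\xi_n)$ for $(\HH_\chi)$, the vectors $q_\chi\xi_n(\chi)$ are dense in $\mathrm{range}\,q_\chi$, and a measurable Gram--Schmidt process yields measurable orthonormal fields $u_1(\chi),\dots,u_k(\chi)$ (extended by $0$ where $d(\chi)$ is smaller) whose nonzero members form a basis of $\mathrm{range}\,q_\chi=\ker p_\chi$ a.e. The difficulty is that these are unit fields, hence need not be square-integrable when $\mu(\hat{G})=\infty$; I would resolve this using that $\hat{G}$ is $\sigma$-compact, so $\mu$ is $\sigma$-finite and admits a strictly positive measurable weight $h$ with $\int h^2\,d\mu<\infty$. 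Setting $v_i:=\int^{\oplus}h(\chi)u_i(\chi)\,d\mu(\chi)\in\HH$ and $V:=\spa\{v_1,\dots,v_k\}$, the projection of $V$ onto $\HH_\chi$ is $\spa\{u_1(\chi),\dots,u_k(\chi)\}=\ker p_\chi$ a.e. (it is precisely $h(\chi)>0$ that leaves the span unchanged). Applying part (ii) once more, the $\pi(G)$-invariant subspace generated by $V$ equals $\int^{\oplus}\ker p_\chi\,d\mu(\chi)=\ker p$, so the $\pi(G)$-dimension of $\ker p$ is at most $\dim V\le k$. I expect the measurability of the Gram--Schmidt selection and the a.e.-uniqueness of fiber decompositions to be the points demanding the most care.
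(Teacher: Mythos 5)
Your proof is correct, and its overall skeleton coincides with the paper's: the easy direction via assertion (ii) of the spectral theorem, and the hard direction by measurably selecting spanning vectors of $\ker p_{\chi}$, damping them by a strictly positive square-integrable weight (your $h$, the paper's $f$), and invoking (ii) again to identify $\ker p$ with the $\pi(G)$-invariant subspace generated by finitely many vectors. Where you genuinely diverge is in the measurable-selection step, which is the technical heart of the paper's argument. The paper proves an ad hoc claim: it pulls back the operator norm to the base space via $d(x,y)=\|p_x-p_y\|$, uses separability of $\Pi_{-k}(\HH')$ to reduce to a small ball, and there transports a fixed orthonormal basis $(e_1,\dots,e_k)$ of $\ker p_y$ by the perturbation $e_i^z=(1-p_z)(e_i)$, checking these stay independent when the ball is small. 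You instead run the standard direct-integral machinery: apply the decomposable operator $q=\Id-p$ to a fundamental sequence of measurable vector fields to get a measurable dense family in each $\ker p_{\chi}$, then perform measurable Gram--Schmidt. Your route leans on standard (Dixmier-style) lemmas about measurable fields and so avoids the norm-perturbation argument entirely, at the cost of invoking that machinery as a black box; the paper's route is more self-contained but glosses over two points you handle explicitly, namely that the fibers $p_{\chi}$ of a decomposable projector are themselves projectors a.e. (the first assertion of the lemma), and the a.e. uniqueness of fibers needed to compare $\int^{\oplus}V_{\chi}d\mu(\chi)$ with $\int^{\oplus}\ker p_{\chi}\,d\mu(\chi)$ in the forward direction. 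Both proofs rest on the same $\sigma$-finiteness of $\mu$ to produce the positive $L^2$ weight, which you justify and the paper tacitly assumes.
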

\begin{proof}
By (ii) that if $p$ has $\pi(G)$-corank at most $k$, then for a.e. ${\chi}\in \hat{G}$,
$p_{\chi}$ has corank at most $k$.
The converse is also true but more subtle.

\begin{clai}
Let $\{p_x\}_{x\in X}$ be a measurable family of projectors of finite $\corank$ in some separable Hilbert space $\HH'$, where $X$ is some Borel measure space. For $i=1,2, \ldots,$  there exists a measurable family of vectors $\{e_i^x\}$ such that for a.e. $x\in X$, $(e_1^x, e_2^x, \ldots, e_{r_x}^x)$ is a basis of $\ker p_x$, and $e_i^x=0$ for $i>r_x$.
\end{clai}
\begin{proof} First, note that $r_x=\corank(p_x)$ is a measurable map $X\to \N$, and then define a measurable partition of $X=X_0\cup X_1\ldots$ such that $r_x$ equals $i$ on $X_i$ for each $i\in \N$.
So we can suppose that $r_x$ is constant, equal to some integer $k$.

Now, let us define a metric on $X$, by pulling back the norm on $\Pi=\Pi(\HH')$, i.e.
$d(x,y):= \|p_x-p_y\|.$
Clearly, $d$ is measurable. Since $\Pi_{-k}$ is a separable metric space, we can cover it by a countable family of balls of arbitrarily  small radius. Hence, one sees by an easy argument that it is enough to define $\{e_i^x\}$ on a small ball, say $B(x,\eps)$ for some $x\in X$, and $\eps>0$.
As $p_x$ is defined almost everywhere, either $B(x,\eps)$ has measure $0$, in which case, there is nothing to do, or there exists $y\in B(x,\eps)$ such that $p_y$ is well defined, and $r_y=k$. Take an orthonormal basis $(e_1, e_2, \ldots, e_k)$ of $\ker p_y$, and for all $z\in B(x,\eps)$, define
$e_i^z=(1-p_z)(e_i)$, for $i=1,\ldots, k.$ If $\eps$ is small enough, it is easy to see that  the $(e_i^z)_i$ are still linearly independent, and therefore form a basis of $\ker p_z$. \end{proof}

\bigskip

 Note that in the above claim, we can even suppose that the $(e_i^x)_{1\leq i\leq r_x}$ form an orthogonal basis of $\ker p_x$ for a.e. $x\in \hat{G}$, and that $\|e_i^x\|\leq 1$ for a.e.  $x\in \ker p_x$ and all $i$.
Now, if $p=\int^\oplus_{\hat{G}}p_{\chi}d\mu({\chi})$, with $\corank (p_{\chi})\leq k$, by the claim, we can find a measurable family of vectors $\{(e_1^{\chi}, e_2^{\chi}, \ldots,e_k^{\chi})\}_{\chi}$ such that  for a.e.  ${\chi}$, $(e_1^{\chi},\ldots, e_k^{\chi})$ generates $\ker p_{\chi}$, and are mutually orthogonal (some might be equal to $0$), and that $\|e_i^{\chi}\|\leq 1$ for a.e. $\chi$.  Now, let $f$ be a positive function in  $L^2(\hat{G},\mu)$, and define for a.e. $\chi$, $\xi_i(\chi)=f({\chi})e_i^{\chi}$. Note that $\xi_i\in L^2(\hat{G},(\HH_{\chi}))$ for all $i$.  We need to prove that $\ker p$ is generated, as a closed $\pi(G)$-invariant  subspace by $\xi_1, \ldots, \xi_k$. But since $p=\int^\oplus_{\hat{G}}p_{\chi}d\mu({\chi})$, if $W=\ker p$ then $W=\int^{\oplus}_{\chi}V_{\chi}d\mu(\chi),$ where $V_\chi=\ker p_\chi$. But by construction,  $V_\chi=\spa\{\xi_i(\chi):\; i=1,\cdots,k\}$. Thus, $\ker p$ has $\pi(G)$-codimension $k$. 
This finishes the proof of the lemma.
\end{proof}

\bigskip

Now we are in position to finish the proof of Theorem \ref{GinvThm}. By Theorem \ref{SpectralThm} (i), every $\pi(G)$-invariant projector
decomposes as $p=\int_{\hat{G}}p_{\chi}d\mu({\chi})$ where $(p_{\chi})_{{\chi}\in \hat{G}}$
is a Borel measurable family of projectors in $L^2(\hat{G},(\HH_{\chi}))$.
The proof of the theorem follows immediately from (iii), Corollary \ref{finitecorankThm} and the previous  lemma. 
$\square$

\subsection{Concrete examples and applications}
In this section we provide several examples and applications to illustrate the results.
\begin {ex} As a first application of Theorem \ref {topologicalPropMAP} or Corolloary \ref {finitecorankThm} we get the well known C.~Eckart and G.~Young Theorem \cite {EY36}. Specifically, if $\HH$ is finite dimensional and if and $\CC$ is the family of subspaces of dimension less than or equal to $r$, then   $\CC$ satisfies MSAP since $\CC$ is closed. As a consequence, Problem \ref {NLproblem} has a solution.
\end{ex}
\bigskip

\begin {ex}A more interesting example is a generalization of the C.~Eckart and G.~Young Theorem \cite {EY36} when $\HH$ is infinite dimensional and   $\CC$ is the family of subspaces of dimension less than or equal to $r$.  For this case, a direct application of Corollary \ref {finitecorankThm} implies that $\CC$ satisfied MSAP.
\end {ex}
\bigskip

\begin {ex} Another interesting example is  when $G=\Z$,  $\HH=L^2(\R)$, $\pi(l)f:=T_l f$ where $T_l$ is the translation operator by the translation $l$, and $\CC_k$ is the set of all subspaces that are $\pi(\Z)$-invariant and  and have $\pi(\Z)$-dimension $n$. The set $\CC_k$  is precisely the so called shift invariant subspaces of $L^2(\R)$ of length at most $k$, that acts by translations on $\HH=L^2(\R^d)$. As a direct consequence of Theorem \ref {GinvThm}, the set $\CC_k$ satisfies MASP. This gives a new proof for the fact that $\CC_k$ satisfies MSAP \cite {ACHM07}.
\end {ex}

\begin {ex} A discrete version of the shift invariant space result above can be obtained by letting $G=m\Z$ where $m$ is a positive integer,  $\HH=\ell^2(\Z)$, $\pi(ml)f:=S_{ml} x$ where $S_{ml}$ is the shift operator by the index $ml$,i.e., if $y=S_{ml}x$, then $y_n=x_{n-ml}$ for all $n \in \Z$.  If we let $\CC_k$ to be  the set of all subspaces that are $\pi(m\Z)$-invariant and  have $\pi(m\Z)$-dimension $k$. Then the set $\CC_k$ satisfies MSAP.
\end {ex}

\begin {ex}
An interesting and useful case is in the finite dimensional case $\HH=\ell^2(\{1,\cdots,d\})$, where $d=ml$ for some positive integers $m$ and $l$. Let $G=\Z/m\Z$ be the cyclic group of integers modulo $m$. By letting $\pi(j)x=S_{jl}x$ be the shift operator modulo $d$ we get that the set  $\CC_k$  of all subspaces that are $\pi(\Z/m\Z)$-invariant and  have $\pi(m\Z)$-dimension $k$ satisfy MASP.
\end{ex}

The last three examples are related in a way similar to the way the Fourier Integral, the Fourier series and the discrete FFT are related, and hence this relation can be used for  numerical approximations and implementations.

\baselineskip=16pt

\bigskip
\footnotesize

\end{document}